\newtheorem{theorem}{Theorem}
\newtheorem{lemma}{Lemma}
\newtheorem{remark}{Remark}
\newtheorem{corollary}{Corollary}
\theoremstyle{definition}
\newtheorem{assumption}{Assumption}
\newcommand{\mB}{\mathcal{B}}
\newcommand{\mH}{\mathcal{H}}
\newcommand{\mLV}{\mathcal{LV}}
\newcommand{\mT}{\mathcal{T}}
\newcommand{\mU}{\mathcal{U}}
\newcommand{\mV}{\mathcal{V}}
\newcommand{\mX}{\mathcal{X}}
\newcommand{\mY}{\mathcal{Y}}
\newcommand{\mZ}{\mathcal{Z}}
\begin{document}

\begin{tikzpicture}[remember picture,overlay]
	\node[anchor=north east,inner sep=20pt] at (current page.north east)
	{\includegraphics[scale=0.2]{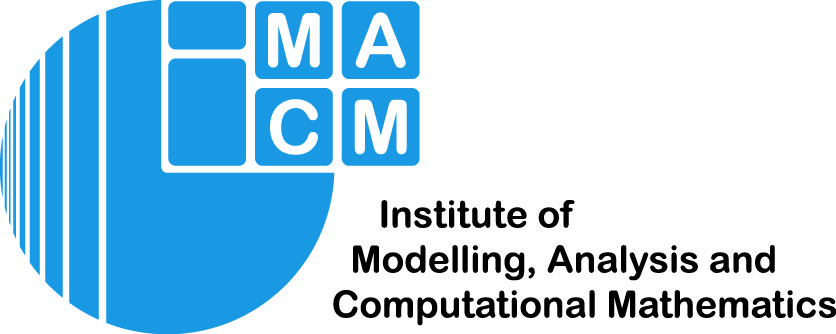}};
\end{tikzpicture}

\begin{frontmatter}

\title{Dynamical Behavior of a Stochastic Epidemiological Model: Stationary Distribution and Extinction of a SIRS Model with Stochastic Perturbations}

\author[AMNEA]{Achraf Zinihi}
\ead{achrafzinihi99@gmail.com}

\author[AMNEA]{Moulay Rchid Sidi Ammi}
\ead{rachidsidiammi@yahoo.fr}

\author[BUW]{Matthias Ehrhardt\corref{Corr}}
\cortext[Corr]{Corresponding author}
\ead{ehrhardt@uni-wuppertal.de}

\address[AMNEA]{Department of Mathematics, MAIS Laboratory, AMNEA Group, Faculty of Sciences and Technics,\\
Moulay Ismail University of Meknes, Errachidia 52000, Morocco}

\address[BUW]{University of Wuppertal, Chair of Applied and Computational Mathematics,\\
Gaußstrasse 20, 42119 Wuppertal, Germany}


\begin{abstract}
This paper deals with a new epidemiological model of SIRS with stochastic perturbations. 
The primary objective is to establish the existence of a unique non-negative nonlocal solution.
Using the basic reproduction number $\mathscr{R}_0$ derived from the associated deterministic model, we demonstrate the existence of a stationary distribution in the stochastic model. 
In addition, we study the fluctuation of the unique solution of the deterministic problem around the disease-free equilibrium under certain conditions. 
In particular, we reveal scenarios where random effects induce disease extinction, contrary to the persistence predicted by the deterministic model.
The theoretical insights are complemented by numerical simulations, which provide further validation of our findings.
\end{abstract}

\begin{keyword}
epidemiological model \sep stochastic model \sep It{ô} formula \sep Lyapunov function \sep numerical approximations\\
\textit{2020 Mathematics Subject Classification.} 92C60 \sep 91G30 \sep 39A50 \sep 93D05 \sep 33F05
\end{keyword}

\journal{} 


\end{frontmatter}

\section{Introduction}\label{S1}
Stochastic models in epidemiology are mathematical tools that incorporate randomness and uncertainty into the dynamics of infectious diseases. 
Unlike deterministic models, which assume fixed parameters and initial conditions, stochastic models account for the variability and unpredictability of real-world epidemics. 
Stochastic models can capture the effects of random events such as individual contacts, transmission events, recovery times, and environmental fluctuations that can affect the spread and control of disease. 
Moreover, these models are powerful and versatile tools that can help us understand and predict the behavior of infectious diseases in a stochastic world \cite{Jiang2011, Nguyen2020, Lan2019, Tuerxun2021}.

The SIRS model is a compartmental model commonly used to understand and describe the dynamics of infectious diseases. 
The population in this model is divided into three distinct compartments based on their disease status:
\begin{itemize}
\item[$i.$] Susceptible ($\mX$): Individuals in this compartment are susceptible to the infectious agent, but have not yet been infected.

\item[$ii.$] Infectious ($\mY$): Individuals in this compartment are currently infected and can transmit the disease to susceptible individuals.

\item[$iii.$] Recovered ($\mZ$): Individuals who have recovered from infection and developed immunity.
\end{itemize}
Unlike the SIR models (where recovered individuals are assumed to have lifelong immunity), the SIRS models assume that individuals who recover from infection do not acquire permanent immunity. 
Instead, after a period of time, they return to the $\mX$ compartment and become susceptible to the disease again. 
The dynamics of the SIRS model is often described by a system of ordinary differential equations (ODEs), expressed as follows
\begin{equation}\label{E1.1}
\begin{cases}
\;\frac{d \mX(t)}{d t} &= \Lambda  + \eta \mZ(t) - \beta \mX(t) \mY(t) - \mu \mX(t),\\[.1cm]
\;\frac{d \mY(t)}{d t} &= \beta \mX(t) \mY(t) - (\alpha + \mu + \gamma) \mY(t),\\[.1cm]
\;\frac{d \mZ(t)}{d t} &= \gamma \mY(t) - (\eta + \mu) \mZ(t),
\end{cases}
\end{equation}
where
\begin{table}[ht]
\begin{minipage}{0.49\linewidth}
\begin{itemize}
\item[] $\alpha > 0$: Disease-related death rate.

\item[] $\beta > 0$: Effective contact rate.

\item[] $\eta > 0$: Immunity loss rate.
\end{itemize}
\end{minipage}
\hfill
\begin{minipage}{0.49\linewidth}
\begin{itemize}
\item[] $\mu > 0$: Natural death rate.

\item[] $\gamma > 0$: Recovery rate of $\mY$.

\item[] $\Lambda > 0$: Recruitment rate of the population.
\end{itemize}
\end{minipage}
\end{table}\\
To identify equilibria, we set the right side of \eqref{E1.1} to zero. 
This results in the identification of two equilibria in the coordinate space $(S, I, R)$. 
Specifically, the \textit{disease-free equilibrium} (DFE) $E^f(\frac{\Lambda}{\mu}, 0, 0)$ and the \textit{endemic equilibrium} (EE) $E^*(\mX^*, \mY^*, \mZ^*)$, with  
\begin{equation}\label{E1.2}
\mX^* = \frac{\Lambda}{\mu \mathscr{R}_0}, \ \mY^* = \frac{\mu + \eta}{\gamma}\mZ^*, \ \text{ and } \ 
\mZ^* = \frac{\mu\gamma (\alpha + \mu + \gamma)}{\beta \Lambda \bigl(\mu\gamma + (\mu + \eta)(\mu + \alpha)\bigr)}(\mathscr{R}_0 - 1),
\end{equation}
where $\mathscr{R}_0 = \frac{\beta \Lambda}{\mu (\alpha + \mu + \gamma)}$ is the \textit{basic reproduction number} of \eqref{E1.1}. 
More details and information on SIRS models can be found in \cite{Alahakoon2023, Chen2014, Pan2022, Xu2010}.

In contrast, stochastic SIRS models introduce a probabilistic framework that recognizes that infectious disease dynamics are subject to random events and unpredictability in the spread of infectious diseases. 
In the stochastic SIRS model, the transitions between the three compartments are modeled as stochastic processes using stochastic differential equations (SDEs). 
The randomness in the model accounts for variability in individual-level interactions, transmission events, and recovery processes.
The study \cite{Nguyen2020} investigated a stochastic epidemiological model of SIRS characterized by an incidence rate. 
The investigation included the introduction of a real-valued threshold, denoted $\mathscr{R}$, to classify the conditions of extinction and persistence. 
The results showed that if $\mathscr{R} < 0$, the disease is expected to eventually disappear. 
Conversely, when $\mathscr{R} > 0$, the epidemic exhibits strong stochastic permanence.
In line with these results, modifications were applied to the system \eqref{E1.1}, leading to the following expression
\begin{equation}\label{E1.3}
\begin{cases}
\;d \mX(t) &= \bigl( \Lambda  + \eta \mZ(t) - \beta \mX(t) \mY(t) - \mu \mX(t) \bigr)\,dt 
+ \sigma_1 \mX(t) \,d\mB_1(t),\\[.1cm]
\;d \mY(t) &= \bigl( \beta \mX(t) \mY(t) - (\alpha + \mu + \gamma) \mY(t) \bigr)\,dt 
+ \sigma_2 \mY(t) \,d\mB_2(t),\\[.1cm]
\;d \mZ(t) &= \bigl( \gamma \mY(t) - (\eta + \mu) \mZ(t) \bigr)\,dt 
+ \sigma_3 \mZ(t) \,d\mB_3(t).
\end{cases}
\end{equation}
Here, $\mB_1$, $\mB_2$, and $\mB_3$ denote three correlated 
Brownian motions, where $\sigma_1$, $\sigma_2$, and $\sigma_3$ represent the intensities of fluctuations due to the random environment of $\mX$, $\mY$, and $\mZ$, respectively.

The authors in \cite{Lan2019} introduced and studied a stochastic SIRS model with a non-monotone incidence rate under regime switching. 
First, the authors established the existence of a unique positive solution, a prerequisite for the subsequent analysis of the long-term behavior of the stochastic model.
They then developed a threshold dynamics determined by the basic reproduction number $\mathscr{R}_0^s$. 
The results showed that if $\mathscr{R}_0^s < 1$ and under mild additional conditions, the disease could be almost certainly eradicated. 
Conversely, if $\mathscr{R}_0^s > 1$, the density distributions of the solution in $L^1$ could converge to an invariant density, according to the theory of Markov semigroups.
Following these results, the system \eqref{E1.1} underwent a transformation into the following It{ô} SDE
\begin{equation}\label{E1.4}
\begin{cases}
\;d \mX(t) &= \bigl( \Lambda  + \eta \mZ(t) - \beta \mX(t) \mY(t) - \mu \mX(t) \bigr)\,dt 
- \sigma_4 \mX(t) \mY(t) \,d\mB_4(t),\\[.1cm]
\;d \mY(t) &= \bigl( \beta \mX(t) \mY(t) - (\alpha + \mu + \gamma) \mY(t) \bigr)\,dt 
+ \sigma_4 \mX(t) \mY(t) \,d\mB_4(t),\\[.1cm]
\;d \mZ(t) &= \bigl( \gamma \mY(t) - (\eta + \mu) \mZ(t) \bigr)\,dt,
\end{cases}
\end{equation}
where $\sigma_4$ denotes the environmental white noise density and $\mB_4$ is a standard Brownian motion.

Exploration of the stochastic SIRS mathematical epidemiological model provides a compelling and nuanced way to understand infectious disease dynamics. 
By introducing stochastic elements, this modeling approach reflects the inherent uncertainty and indiscriminacy observed in real-world epidemiological scenarios.
Furthermore, this model contributes to our understanding of emerging and re-emerging diseases, where unpredictability plays an important role in the development of mathematical epidemiology, especially in stochastic modeling.
Moreover, researchers not only gain valuable insights into the dynamics of infectious diseases, but also contribute to the development of methodologies essential for public health planning and response in a constantly evolving landscape.\\
Motivated and inspired by the investigations in \cite{Lan2019} and \cite{Nguyen2020}, this study explores the implications of stochastic variation arising from environmental white noise. 
The stochastic counterpart, derived from the deterministic system \eqref{E1.1}, is elucidated in the following section.

The structure of this manuscript is as follows. 
In Section~\ref{S2}, the stochastic SIRS model is introduced and explained. 
Section~\ref{S3} investigates the global existence and positivity of a unique solution. 
Building on this, Section~\ref{S4} establishes the existence of a stationary distribution under certain parametric constraints. 
The dynamics of the solution around the DFE of \eqref{E1.1} is studied in Section~\ref{S5}. 
This investigation leads to the derivation of sufficient conditions for disease extinction in Section~\ref{S6}. 
To illustrate the theoretical results, insightful numerical simulations are presented in Section~\ref{S7}. 
Finally, a comprehensive conclusion of the study is presented in Section~\ref{S8}.

\section{The proposed Model}\label{S2}
Research using the stochastic SIRS model can provide insights into the role of randomness in shaping the course of infectious diseases, the impact of random events on epidemic outcomes, and the effectiveness of interventions in uncertain environments. 
This modeling approach is valuable for understanding the nuanced and probabilistic nature of disease dynamics in real-world populations.

We account for variations in the population environment to study the dynamics of the SIRS model, focusing on its long-term behavior. 
The total population at any time $t$ is denoted by $\mathcal{N}(t)$, and it is categorized into three exclusive compartments, as detailed in the previous section.\\
According to \eqref{E1.1}, we can express the stochastic version of the SIRS model. 
Here, $\sigma_1$, $\sigma_2$, $\sigma_3$, and $\sigma_4$ represent the intensities of the standard Gaussian white noise associated with the independent standard Brownian motion $\mB_1(t)$, $\mB_2(t)$, $\mB_3(t)$, and $\mB_4(t)$, respectively. 
The proposed model then has the following form
\begin{equation}\label{E2.1}
\begin{cases}
\;d \mX(t) &= \bigl( \Lambda  + \eta \mZ(t) - \beta \mX(t) \mY(t) - \mu \mX(t) \bigr)\,dt 
   - \sigma_4 \mX(t) \mY(t) \,d\mB_4(t) 
   + \sigma_1 \mX(t) \,d\mB_1(t),\\[.1cm]
\;d \mY(t) &= \bigl( \beta \mX(t) \mY(t) - (\alpha + \mu + \gamma) \mY(t) \bigr)\,dt 
   + \sigma_4 \mX(t) \mY(t)\, d\mB_4(t) 
   + \sigma_2 \mY(t) \,d\mB_2(t),\\[.1cm]
\;d \mZ(t) &= \bigl( \gamma \mY(t) - (\eta + \mu) \mZ(t) \bigr)\,dt 
   + \sigma_3 \mZ(t)\,d\mB_3(t),
\end{cases}
\end{equation}
with the \textit{initial conditions} (ICs)
\begin{equation}\label{E2.2}
\mX(0) = \mX_0 > 0, \quad \mY(0) = I_0 > 0, \ \text{ and } \ \mZ(0) = R_0 > 0.
\end{equation}
In the following, unless explicitly stated otherwise, we consider a complete probability space denoted by $(\mathfrak{B}, \mathscr{F}, \{\mathscr{F}_t\}_{t\ge 0}, \mathbb{P})$. 
In addition, we use the notation
\begin{equation*}
   \mathbb{W} = \mathbb{R}^*_+ \times \mathbb{R}^*_+ \times \mathbb{R}^*_+ = \bigl\{ (x, y, z) \in \mathbb{R}^3 \ | \ x > 0, \ y > 0, \text{ and } z > 0\bigr\}.
\end{equation*}

\section{Existence, Uniqueness, and Positivity of the Solution}\label{S3}
The main result of this section can be expressed as follows

\begin{theorem}\label{T1}
For any $(\mX_0, \mY_0, \mZ_0) \in \mathbb{W}$, problem \eqref{E2.1} has a unique solution in $\mathbb{W}$ almost surely (a.s.) with unit probability for all $t\ge 0$.
\end{theorem}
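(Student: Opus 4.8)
The plan is to follow the classical Khasminskii localization scheme. Since the drift and diffusion coefficients of \eqref{E2.1} are polynomial in $(\mX,\mY,\mZ)$, they are locally Lipschitz on $\mathbb{W}$; hence for any datum $(\mX_0,\mY_0,\mZ_0)\in\mathbb{W}$ there is a unique maximal local solution on $[0,\tau_e)$, where $\tau_e$ is the explosion time. It then remains to show $\tau_e=\infty$ a.s.\ and that the trajectory never leaves $\mathbb{W}$. First I would fix $n_0$ so large that $(\mX_0,\mY_0,\mZ_0)$ lies in the box $[1/n_0,n_0]^3$, and for $n\ge n_0$ define the stopping times $\tau_n=\inf\{t\in[0,\tau_e):\min(\mX,\mY,\mZ)\le 1/n\ \text{or}\ \max(\mX,\mY,\mZ)\ge n\}$. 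These increase to some $\tau_\infty\le\tau_e$, and the whole statement reduces to proving $\tau_\infty=\infty$ a.s.

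The core is a Lyapunov estimate. I would take the standard function $V(\mX,\mY,\mZ)=(\mX-1-\ln\mX)+(\mY-1-\ln\mY)+(\mZ-1-\ln\mZ)$, which is nonnegative on $\mathbb{W}$ and blows up to $+\infty$ as any coordinate tends to $0$ or to $+\infty$. Applying It\^{o}'s formula and collecting terms, the $\beta\mX\mY$ contributions cancel between the $\mX$- and $\mY$-lines, leaving a constant $C_0$ (gathering $\Lambda$, the removal rates, and $\tfrac12(\sigma_1^2+\sigma_2^2+\sigma_3^2)$), several manifestly nonpositive terms, and the two troublesome curvature terms $\tfrac12\sigma_4^2\mX^2$ and $\tfrac12\sigma_4^2\mY^2$ produced by the multiplicative cross-noise $\sigma_4\mX\mY$.

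This quadratic growth is exactly the main obstacle: unlike the pure $\sigma_i u_i$ noises, the $\sigma_4\mX\mY$ term makes $\mathscr{L}V$ grow like $\mX^2+\mY^2$, so the naive bound $\mathscr{L}V\le K$—or even $\mathscr{L}V\le K(1+V)$, since $V$ is only linear in the coordinates—fails and the textbook estimate does not close. To get around this I would exploit the total population $\mathcal{N}=\mX+\mY+\mZ$, for which the $\sigma_4$-noise cancels identically, giving $d\mathcal{N}=(\Lambda-\mu\mathcal{N}-\alpha\mY)\,dt+\sigma_1\mX\,d\mB_1+\sigma_2\mY\,d\mB_2+\sigma_3\mZ\,d\mB_3$. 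A direct computation then yields $\mathscr{L}\mathcal{N}^2\le C_1+C_2\mathcal{N}^2$, so Gronwall's inequality furnishes an a priori bound $\sup_n\mathbb{E}\,\mathcal{N}(t\wedge\tau_n)^2\le g(t)<\infty$, finite for each $t$ and independent of $n$. Since $\mX^2+\mY^2\le\mathcal{N}^2$, this controls the bad terms in expectation.

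With this in hand the localization closes in the usual way. Integrating $\mathscr{L}V$ and taking expectations gives $\mathbb{E}\,V\bigl(\mX(t\wedge\tau_n),\mY(t\wedge\tau_n),\mZ(t\wedge\tau_n)\bigr)\le V(\mX_0,\mY_0,\mZ_0)+C_0 t+\tfrac12\sigma_4^2\int_0^t g(s)\,ds=:H(t)$, a bound independent of $n$. On the event $\{\tau_n\le T\}$ at least one coordinate equals $1/n$ or $n$ at time $\tau_n$, so $V$ there is at least $\delta_n:=\min(n-1-\ln n,\ \tfrac1n-1+\ln n)\to\infty$; hence $\mathbb{P}(\tau_n\le T)\le H(T)/\delta_n$. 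Letting $n\to\infty$ forces $\mathbb{P}(\tau_\infty\le T)=0$ for every $T$, whence $\tau_\infty=\infty$ a.s. Because the trajectory stays inside $[1/n,n]^3$ up to $\tau_n$, this simultaneously yields global existence, positivity (the solution never reaches $\partial\mathbb{W}$), and—together with the local uniqueness already established—uniqueness in $\mathbb{W}$.
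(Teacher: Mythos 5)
Your proposal follows the same Khasminskii localization skeleton as the paper (local Lipschitz coefficients, stopping times $\tau_n$, a Lyapunov function of the form $u-c-c\log(u/c)$, and a contradiction on $\{\tau_n\le T\}$), but it differs in one crucial and genuinely valuable respect: you refuse to treat $\mLV$ as bounded by a constant. The paper's own proof bounds $\mLV$ by the quantity $\xi = \Lambda+(\delta+2)\mu+\alpha+\gamma+\eta+\tfrac12\bigl(\delta\sigma_1^2+\sigma_2^2+\sigma_3^2+\sigma_4^2\mX^2(t)+\sigma_4^2\mY^2(t)\bigr)$ and then writes $\mathbb{E}\int_0^{\ell_n\wedge\mT}\xi\,dt\le\xi\mT$ as if $\xi$ were constant, which it is not: the cross-noise $\sigma_4\mX\mY$ leaves state-dependent curvature terms $\tfrac12\sigma_4^2\mX^2$ and $\tfrac12\sigma_4^2\mY^2$ that grow quadratically while $\mV$ grows only linearly, so neither $\mLV\le K$ nor $\mLV\le K(1+\mV)$ closes the estimate. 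Your fix --- observing that the $\sigma_4$-noise cancels in $\mathcal{N}=\mX+\mY+\mZ$, deriving $\mathscr{L}\mathcal{N}^2\le C_1+C_2\mathcal{N}^2$, and using Gronwall to get an $n$-independent bound $\mathbb{E}\,\mathcal{N}^2(t\wedge\tau_n)\le g(t)$ that dominates the quadratic terms --- is exactly the missing ingredient, and it turns the paper's sketch into a complete argument. One small inaccuracy to repair: with your symmetric choice $V=\sum(u-1-\ln u)$ the drift does not reduce to ``a constant plus nonpositive terms,'' because the $\mY$-coefficients combine to $(\beta-\mu-\alpha)\mY$, which may be positive. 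Either weight the $\log\mX$ term by $\delta=(\mu+\alpha)/\beta$ as the paper does (this makes the linear $\mY$-term vanish identically), or absorb $\beta\mY\le\beta\mathcal{N}$ into your Gronwall bound via $\mathbb{E}\,\mathcal{N}(s\wedge\tau_n)\le\sqrt{g(s)}$; with either patch the proof is complete. (You also correctly write the lower bound on the boundary value of $V$ as $\tfrac1n-1+\ln n$, where the paper has a sign typo.)
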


\begin{proof}
For arbitrary ICs $(\mX_0, \mY_0, \mZ_0)$, all coefficients in \eqref{E2.1} are continuous and locally Lipschitz. 
Therefore, the system \eqref{E2.1} has a local solution and only one $(\mX(t), \mY(t), \mZ(t))$ for all $t \in [0, \ell_e)$, where $\ell_e$ denotes the blow-up 
time, i.e.\ the time when the solution diverges to infinity.

Next, in order to establish that the solution is global, it is necessary to show that $\ell_e = \infty$ a.s.
To achieve this, let $n_0 > 0$ be sufficiently large so that all ICs lie within $[\frac{1}{n_0}, n_0]$. 
For each integer $n \ge n_0$ we define the \textit{stopping time} as follows
\begin{equation}\label{E3.1}
\ell_n = \inf\Bigl\{ t \in [0, \ell_e) \ | 
\ \min\bigl(\mX(t), \mY(t), \mZ(t)\bigr) \le \frac{1}{n} \ \text{ or } 
\ \max\bigl(\mX(t), \mY(t), \mZ(t)\bigr) \ge n \Bigr\}.
\end{equation}
Throughout this paper, let $\inf(\phi) = 0$, where $\phi$ is the empty set. As $n$ approaches infinity, the sequence $(\ell_n)$ increases according to the definition of $\ell_n$. 
Set $\lim\limits_{n \to \infty} \ell_n = \ell_\infty$, with $\ell_n \ge \ell_\infty$ etc.
If $\ell_\infty = \infty$ a.s., then $\ell_n= \infty$ and $\mX(t) > 0, \ \mY(t) > 0$ and $\mZ(t) > 0$ a.s.\ for $t \ge 0$.

If the above statement is not true, then there exist constants $\mT > 0$ and $\varepsilon \in (0, 1)$ such that
\begin{equation}\label{E3.2}
    \mathbb{P}(\ell_\infty\le \mT) \ge \varepsilon.
\end{equation}
So there exists an integer $n_1 \ge n_0$ such that
\begin{equation*}
   \mathbb{P}(\ell_n \le \mT) \ge \varepsilon, \ \forall n \ge n_1.
\end{equation*}
Let $\mV \colon\mathbb{W}\to\mathbb{R}_+$ be a $\mathcal{C}^2$ function defined by
\begin{equation}\label{E3.3}
\begin{split}
  \mV(t) &:= \mV\bigl(\mX(t), \mY(t), \mZ(t)\bigr)\\ 
  &= \mX(t) + \mY(t) + \mZ(t) - (2 + \delta) 
     - \delta\Bigl(\log\frac{\mX(t)}{\delta} + \frac{1}{\delta}\log\mY(t) + \frac{1}{\delta}\log\mZ(t)\Bigr),
\end{split}
\end{equation}
where $\delta = \frac{\mu + \alpha}{\beta}$. 
From the fact that $x - 1 - \log(\mathrm{x}) \ge 0$ for $x>0$ we have the positivity of the function $\mV$. 
Applying the It{ô} formula to \eqref{E3.3}, we get
\begin{equation*}
\begin{split}
  d\mV(t) &= \ \mLV(t)\,dt + \bigl(\mX(t) - \delta\bigr)\bigl(\sigma_1 \,d\mB_1(t) - \sigma_4 \mY(t) \,d\mB_4(t)\bigr)\\
   &\qquad + \bigl(\mY(t) - 1\bigr)\bigl(\sigma_2 \,d\mB_2(t) + \sigma_4 \mX(t) \,d\mB_4(t)\bigr) 
   + \bigl(\mZ(t) - 1\bigr)\sigma_3 \,d\mB_3(t),
\end{split}
\end{equation*}
with $\mLV\colon\mathbb{W} \to \mathbb{R}_+$ defined by
\begin{equation*}
\begin{split}
\mLV(t) &:= \mLV\bigl(\mX(t), \mY(t), \mZ(t)\bigr)\\ 
  &= \Bigl(1 - \frac{\delta}{\mX(t)}\Bigr)
   \bigl(\Lambda + \eta\mZ(t) - \beta\mX(t)\mY(t) - \mu\mX(t)\bigr) 
   + \frac{\delta\sigma_1^2 + \sigma_4^2 \mY^2(t)}{2}\\
   &\qquad + \Bigl(1 - \frac{1}{\mY(t)}\Bigr)
    \bigl(\beta\mX(t)\mY(t) - (\alpha + \mu + \gamma)\mY(t)\bigr) 
     + \frac{\sigma_2^2 + \sigma_4^2 \mX^2(t)}{2}\\
   &\qquad + \Bigl(1 - \frac{1}{\mZ(t)}\Bigr)
     \bigl(\gamma\mY(t) - (\mu + \eta)\mZ(t)\bigr)
     + \frac{\sigma_3^2}{2} \\
   &= \Lambda + (\delta + 2)\mu + \alpha + \gamma + \eta +\frac{\delta\sigma_1^2 + \sigma_4^2 \mY^2(t) + \sigma_2^2 + \sigma_4^2 \mX^2(t) + \sigma_3^2}{2}\\
   &\qquad - \frac{\delta\Lambda + \delta\eta\mZ(t)}{\mX(t)} 
     - \frac{\gamma\mY(t)}{\mZ(t)} - (\mu + \beta)\mX(t) 
     + \bigl(\beta\delta - (\mu+\alpha)\bigr)\mY(t) - \mu\mZ(t)\\
   &\le \ \Lambda + (\delta + 2)\mu + \alpha + \gamma + \eta +\frac{\delta\sigma_1^2 + \sigma_4^2 \mY^2(t) + \sigma_2^2 + \sigma_4^2 \mX^2(t) + \sigma_3^2}{2} =: \xi.
\end{split}
\end{equation*}
Therefore,
\begin{equation*}
\begin{split}
d\mV(t) &= \xi \,dt + \bigl(\mX(t) - \delta\bigr)
    \bigl(\sigma_1 \,d\mB_1(t) - \sigma_4 \mY(t) \,d\mB_4(t)\bigr)\\
   &\qquad + \bigl(\mY(t) - 1\bigr) \bigl(\sigma_2 \,d\mB_2(t) + \sigma_4 \mX(t) \,d\mB_4(t)\bigr) 
+ \bigl(\mZ(t) - 1\bigr)\sigma_3\, d\mB_3(t).
\end{split}
\end{equation*}
Thus,
\begin{equation}\label{E3.4}
\begin{split}
   \mathbb{E}\bigl(\mV(\ell_n\wedge\mT)\bigr) 
   &\le \mV(0) + \mathbb{E}\biggl(\int_0^{\ell_n\wedge\mT}\xi \,dt\biggr)\\
   &\le \mV(0) + \xi\mT.
\end{split}
\end{equation}
Let $\mathfrak{B}_n = \{\ell_n \le \mT\}$ for $n\ge n_1$. 
According to \eqref{E3.2}, we have $\mathbb{P}(\mathfrak{B}_n) \ge \varepsilon$. 
For $\omega \in \mathfrak{B}_n$ there is at least one of $\mX(\ell_n\wedge\mT)$, $\mY(\ell_n\wedge\mT)$ and $\mZ(\ell_n\wedge\mT)$ such that one of them is equal to $n$ or $\frac{1}{n}$. 
Consequently, $\mV(\ell_n\wedge\mT) := \mV\bigl(\mX(\ell_n, \omega), \mY(\ell_n, \omega), \mZ(\ell_n, \omega)\bigr)$ is not less than $n - 1 - \log(n)$ or $\frac{1}{n} - 1 - \log(n)$. 
Then,
\begin{equation}\label{E3.5}
  \mV(\ell_n\wedge\mT) \ge \Bigl(\frac{1}{n} - 1 - \log(n)\Bigr) \wedge \bigl(n - 1 - \log(n)\bigr).
\end{equation}
Using \eqref{E3.2}, \eqref{E3.4} and \eqref{E3.5}, we get
\begin{equation*}
\begin{split}
\mV(0) + \xi\mT &\ge \mathbb{E}\bigl(1_{\mathfrak{B}_n}(\omega)\mV(\ell_n\wedge\mT)\bigr)\\
&\ge \varepsilon \Bigl(\frac{1}{n} - 1 - \log(n)\Bigr) \wedge 
\bigl(n - 1 - \log(n)\bigr),
\end{split}
\end{equation*}
where $1_{\mathfrak{B}_n}$ is the indicator function of $\mathfrak{B}_n$. Taking the limit as $n$ approaches $\infty$ leads to the following contradiction
\begin{equation*}
   \infty > \mV(0) + \xi\mT = \infty.
\end{equation*}
Then $\ell_\infty = \infty$ a.s.
\end{proof}

\section{The Stationary Distribution}\label{S4}
In this section, we demonstrate the existence of a \textit{stationary distribution} (SD) when the white noise is small. 
Before presenting the main results of this section, we refer to a well-established result of \cite{Khasminskii1980} that will help us in this regard. 
First, we consider the homogeneous Markov process $\mathrm{X}(t)$ in the Euclidean $\ell$-space $\mathscr{E}_\ell$, 
which is governed by the following SDE
\begin{equation}\label{E4.1}
   d\mathrm{X}(t) = \Xi(\mathrm{X})\,dt + \sum_{n=1}^k \sigma_n(\mathrm{X}) \,d\mB_n(t).
\end{equation}
The diffusion matrix is
\begin{equation*}
   \mathcal{A}(\mathrm{x}) = \bigl(a_{i j}(\mathrm{x})\bigr), \quad \text{ with } \quad a_{i j}(\mathrm{x}) = \sum_{n=1}^k \sigma_n^{i}(\mathrm{x})\, \sigma_n^{j}(\mathrm{x}).
\end{equation*}
\begin{assumption} 
There exists a bounded domain $\mU \subset \mathscr{E}_\ell$ with a smooth boundary $\partial\mU$, such that it satisfies the conditions
\begin{itemize}
\item[$(\mH_1)$] Within $\mU$ and its neighborhood, the smallest eigenvalue of $\mathcal{A}$ is bounded away from zero.

\item[$(\mH_2)$] If $\mathrm{x} \in \mathscr{E}_\ell \backslash \mU$, the mean time $\tau$ for a path emerging from $\mathrm{x}$ to reach the set $\mU$ is finite, and $\sup\limits_{\mathrm{x} \in \mathcal{K}} \mathbb{E}^\mathrm{x} \tau<\infty$ holds for every compact subset $\mathcal{K} \subset \mathscr{E}_\ell$.
\end{itemize}
\end{assumption}

\begin{lemma}[{\cite[page 2]{Liu2012}}]\label{L1}
Under the conditions $(\mH_1)$ and $(\mH_2)$, the Markov process $\mathrm{X}(t)$ has a SD $\pi(\cdot)$. 
Let $f$ be an integrable function with respect to $\pi$, then
\begin{equation*}
  \mathbb{P}^\mathrm{x}\biggl(\lim_{\mT\to\infty} \frac{1}{\mT} \int_0^\mT f\bigl(\mathrm{X}(s)\bigr) \,ds
  =\int_{\mathscr{E}_\ell} f(\mathrm{x}) \pi(d \mathrm{x})\biggr) = 1 .
\end{equation*}
\end{lemma}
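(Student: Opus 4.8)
The statement is Khasminskii's criterion for the existence of a stationary distribution together with the accompanying ergodic identity, so the plan is to verify the classical chain of implications: positive recurrence $\Rightarrow$ existence of an invariant probability measure $\Rightarrow$ (with non-degeneracy) uniqueness and ergodicity. Concretely, I would read $(\mH_2)$ as the statement that the diffusion $\mathrm{X}(t)$ governed by \eqref{E4.1} is positively recurrent with respect to the bounded reference set $\mU$, and $(\mH_1)$ as the uniform ellipticity that makes the transition kernel non-degenerate (irreducible and strong Feller) on a neighbourhood of $\mU$.

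For existence, the plan is to use a Krylov--Bogolyubov averaging argument. Fix a starting point $\mathrm{x}$ and define the time-averaged occupation measures $\mu_{\mT}(\cdot) = \frac{1}{\mT}\int_0^{\mT}\mathbb{P}^{\mathrm{x}}(\mathrm{X}(s)\in\cdot)\,ds$. The first key step is to prove that the family $\{\mu_{\mT}\}_{\mT>0}$ is tight, i.e.\ that no mass escapes to infinity; this is exactly where $(\mH_2)$ does the work, since the uniform bound $\sup_{\mathrm{x}\in\mathcal{K}}\mathbb{E}^{\mathrm{x}}\tau<\infty$ on the mean hitting time of $\mU$ lets one control, via the strong Markov property and an excursion/renewal decomposition, the expected fraction of time the path spends outside a large ball and show it vanishes uniformly in $\mT$ as the radius grows. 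Given tightness, Prokhorov's theorem yields a weakly convergent subsequence $\mu_{\mT_k}\to\pi$, and the Feller property of the semigroup---available because the coefficients $\Xi$ and $\sigma_n$ in \eqref{E4.1} are continuous---upgrades the approximate invariance of $\mu_{\mT}$ to exact invariance of the limit $\pi$, which is then the desired stationary distribution.

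For uniqueness and the ergodic identity I would invoke $(\mH_1)$. Uniform non-degeneracy of the diffusion matrix $\mathcal{A}$ near $\mU$ makes the process irreducible and endows its transition semigroup with the strong Feller property, so any two invariant probability measures must coincide; combined with positive recurrence this forces ergodicity. The claimed almost-sure convergence $\frac{1}{\mT}\int_0^{\mT}f(\mathrm{X}(s))\,ds\to\int f\,d\pi$ for $\pi$-integrable $f$ is then the strong law of large numbers for additive functionals of an ergodic, positively recurrent diffusion, which I would prove by cutting the trajectory at successive return times to $\mU$ into i.i.d.\ cycles and applying the renewal-reward theorem, the cycle lengths being integrable thanks again to $(\mH_2)$.

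The hard part will be the tightness step: everything else is comparatively soft once tightness is in hand, but ruling out escape of mass requires converting the scalar information ``finite, uniformly bounded mean hitting time of $\mU$'' into a quantitative bound on the expected sojourn in the complement of large compacts. The cleanest route is to extract from $(\mH_2)$ a Foster--Lyapunov drift inequality $\mathscr{L}W\le -1$ outside $\mU$ for a suitable $W\ge 0$ (for instance $W(\mathrm{x})=\mathbb{E}^{\mathrm{x}}\tau$), where $\mathscr{L}$ is the generator of \eqref{E4.1}, and then use Dynkin's formula to bound $\frac{1}{\mT}\mathbb{E}^{\mathrm{x}}\int_0^{\mT}\mathbf{1}_{\mU^c}(\mathrm{X}(s))\,ds$. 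Matching the non-degeneracy neighbourhood supplied by $(\mH_1)$ with the recurrence set appearing in $(\mH_2)$, so that both hypotheses act on the same domain, is the delicate bookkeeping I expect to be the main technical obstacle.
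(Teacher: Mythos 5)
The paper does not prove this lemma at all: it is imported verbatim from the literature (Khasminskii via \cite[page 2]{Liu2012}), so there is no internal argument to compare yours against. Judged on its own, your sketch assembles the right ingredients and the overall logic is sound, but it follows the modern Krylov--Bogolyubov/strong-Feller route rather than Khasminskii's original one, and two steps need repair as stated. Khasminskii constructs $\pi$ directly from the cycle structure: alternating hitting times of $\partial\mU$ and of the boundary of a slightly larger domain define an embedded Markov chain on $\partial\mU$; the uniform ellipticity in $(\mH_1)$ yields a Doeblin minorization for that chain, hence a unique stationary law $\nu$ on $\partial\mU$, and $\pi$ is the normalized expected occupation measure over one cycle started from $\nu$, with the ergodic identity coming from the law of large numbers for the embedded chain plus renewal--reward. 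Against that, note in your version: (i) the drift inequality $\mathscr{L}W\le -1$ off $\mU$ with $W(\mathrm{x})=\mathbb{E}^{\mathrm{x}}\tau$ does \emph{not} give tightness of the occupation measures $\mu_{\mT}$ over arbitrarily large compacts --- Dynkin's formula only yields a uniform lower bound $\mu_{\mT}(\bar{\mU})\ge\varepsilon>0$, so you must pass to a vague limit, verify it is a nonzero invariant subprobability measure, and normalize (or strengthen $(\mH_2)$ to a Lyapunov function growing at infinity, which is not what the hypothesis gives you); (ii) the excursions between successive returns to $\mU$ are not i.i.d., since the re-entry points on $\partial\mU$ form a Markov chain, so the plain renewal--reward theorem must be replaced by the ergodic theorem for additive functionals of that Doeblin-recurrent chain. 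Both repairs are standard, which is presumably why the paper cites the result rather than reproving it.
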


\begin{remark}\label{R1}
$i.$ To establish the validity of $(\mH_1)$, it suffices to show that $\mV$ is uniformly elliptic in $\mU$, where 
\begin{equation*}
    \mV u = \Xi(\mathrm{x})u_\mathrm{x} + \frac{1}{2}\operatorname{trace}\bigl(\mathcal{A}(\mathrm{x})u_{\mathrm{x}}\bigr).
\end{equation*}
This means that there exists a positive number $\kappa$ such that 
\begin{equation*}
     \sum_{i, j=1}^k a_{ij}(\mathrm{x}) \,\varpi_i \varpi_j \geqslant \kappa |\varpi|^2, 
     \quad \mathrm{x} \in \mU, \ \varpi \in \mathbb{R}^k,
\end{equation*}
(See \cite[page 103]{Gard1988} and \cite[page 349]{Strang1988}).

$ii.$ To check $(\mH_2)$, it suffices to show the existence of a neighborhood $\mU$ and a non-negative $\mathcal{C}^2$-function such that for any $\mathrm{x} \in \mathscr{E}_\ell \backslash \mU$, $\mLV$ is negative (see \cite[page 1163]{Zhu2007}).
\end{remark}

\begin{theorem}\label{T2}
Let $(\mX^{*}, \mY^{*}, \mZ^{*})$ be the EE of \eqref{E1.1}. 
Then, the problem~\eqref{E2.1} has a SD $\pi(\cdot)$ if $\mathscr{R}_0 > 1$ and $0 < \mathscr{C} < \min(\mathscr{D}_1\mX^{* 2}, \mathscr{D}_2\mY^{* 2}, \mathscr{D}_3\mZ^{* 2})$. 
With
\begin{equation}\label{E4.2}
\begin{split}
 \mathscr{D}_1 &= \frac{\mu}{2} - \sigma_1^2 - \frac{(2\mu + \alpha)\mY^*\sigma_4^2}{\beta}, \\
 \mathscr{D}_2 &= \mu+\alpha-\sigma_2^2, \\
 \mathscr{D}_3 &= \frac{\mu(\gamma+2\mu+\alpha) + \eta(2\mu+\alpha)}{\gamma} - \frac{\gamma+2\mu+\alpha}{\gamma}\sigma_3^2, \\
 \mathscr{C} &= \sigma_1^2 \mX^{* 2} + \Bigl(\mY^{* 2}+\frac{2\mu+\alpha}{2\beta} \mY^*\Bigr) \sigma_2^2 + \frac{\gamma+2\mu+\alpha}{\gamma} \mZ^{* 2}\sigma_3^2 
+ \frac{2\mu+\alpha}{\beta} \mX^{* 2}\mY^*\sigma_4^2.
\end{split}
\end{equation}
\end{theorem}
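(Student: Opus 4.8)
The plan is to apply the Khasminskii criterion recorded in Lemma~\ref{L1} with $\ell = 3$ and state space $\mathbb{W}$, so that everything reduces to verifying the two hypotheses $(\mH_1)$ and $(\mH_2)$ through the reformulations in Remark~\ref{R1}. Note first that the assumption $\mathscr{R}_0 > 1$ guarantees, by \eqref{E1.2}, that the endemic equilibrium $(\mX^*, \mY^*, \mZ^*)$ lies in $\mathbb{W}$; this is the point around which the Lyapunov function will be centered, and it also makes the sublevel set appearing in the threshold condition nonempty.

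I would first dispose of $(\mH_1)$. Reading the diffusion coefficients off \eqref{E2.1} and using the independence of $\mB_1,\dots,\mB_4$, the diffusion matrix is
\[
\mathcal{A} = \begin{pmatrix}
\sigma_1^2\mX^2 + \sigma_4^2\mX^2\mY^2 & -\sigma_4^2\mX^2\mY^2 & 0\\
-\sigma_4^2\mX^2\mY^2 & \sigma_2^2\mY^2 + \sigma_4^2\mX^2\mY^2 & 0\\
0 & 0 & \sigma_3^2\mZ^2
\end{pmatrix},
\]
and a direct expansion gives, for every $\varpi \in \mathbb{R}^3$,
\[
\sum_{i,j=1}^3 a_{ij}\varpi_i\varpi_j = \sigma_1^2\mX^2\varpi_1^2 + \sigma_2^2\mY^2\varpi_2^2 + \sigma_3^2\mZ^2\varpi_3^2 + \sigma_4^2\mX^2\mY^2(\varpi_1 - \varpi_2)^2.
\]
On any bounded domain $\mU$ whose closure avoids the coordinate planes, each variable is pinched between two positive constants, so the right-hand side is bounded below by $\kappa|\varpi|^2$ with $\kappa = \min_{\overline{\mU}}\{\sigma_1^2\mX^2, \sigma_2^2\mY^2, \sigma_3^2\mZ^2\} > 0$; this is the uniform ellipticity required by Remark~\ref{R1}(i).

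The core of the argument is $(\mH_2)$, for which I would construct a nonnegative $\mathcal{C}^2$ Lyapunov function $\mV$ centered at $(\mX^*, \mY^*, \mZ^*)$. The natural building blocks are the combined square $(\mX - \mX^* + \mY - \mY^*)^2$ — chosen precisely because the bilinear term $\beta\mX\mY$ and the shared $\sigma_4$-noise cancel in the sum $\mX + \mY$ — a logarithmic term $\tfrac{2\mu+\alpha}{\beta}\bigl(\mY - \mY^* - \mY^*\log(\mY/\mY^*)\bigr)$ that absorbs the residual cross term in $\mX-\mX^*$ and $\mY-\mY^*$ (using $\beta\mX^* = \alpha+\mu+\gamma$), and a quadratic $\tfrac{\gamma+2\mu+\alpha}{\gamma}(\mZ-\mZ^*)^2$, the weights being pinned down so as to reproduce \eqref{E4.2}. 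Applying the generator $\mathcal{L}$, substituting the equilibrium identities $\beta\mX^* = \alpha+\mu+\gamma$, $\gamma\mY^* = (\mu+\eta)\mZ^*$ and $\Lambda + \eta\mZ^* = \beta\mX^*\mY^* + \mu\mX^*$, completing squares to dominate the remaining cross terms, and collecting the It{ô} second-order corrections evaluated at the equilibrium into a single constant, I expect to reach an estimate of the form
\[
\mLV \le -\mathscr{D}_1(\mX - \mX^*)^2 - \mathscr{D}_2(\mY - \mY^*)^2 - \mathscr{D}_3(\mZ - \mZ^*)^2 + \mathscr{C},
\]
with $\mathscr{D}_1, \mathscr{D}_2, \mathscr{D}_3$ and $\mathscr{C}$ exactly as in \eqref{E4.2}. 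I expect this generator bookkeeping to be the main obstacle: the residual cross terms and the nonlinear $\sigma_4^2\mX^2\mY^2$ diffusion contributions must cancel against one another so that the diagonal coefficients collapse exactly to the stated $\mathscr{D}_i$ and the noise terms collapse exactly to $\mathscr{C}$, which is what fixes the weights $\tfrac{2\mu+\alpha}{\beta}$ and $\tfrac{\gamma+2\mu+\alpha}{\gamma}$.

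Finally, the threshold condition $0 < \mathscr{C} < \min(\mathscr{D}_1\mX^{*2}, \mathscr{D}_2\mY^{*2}, \mathscr{D}_3\mZ^{*2})$ plays a double role. It forces $\mathscr{D}_1, \mathscr{D}_2, \mathscr{D}_3 > 0$ (the small-noise requirement), and it keeps the solid ellipsoid
\[
\mathscr{G} = \bigl\{(x,y,z) \in \mathbb{W} : \mathscr{D}_1(x - \mX^*)^2 + \mathscr{D}_2(y - \mY^*)^2 + \mathscr{D}_3(z - \mZ^*)^2 \le \mathscr{C}\bigr\}
\]
strictly inside $\mathbb{W}$: on the plane $\{x = 0\}$ one has $\mathscr{D}_1(x - \mX^*)^2 = \mathscr{D}_1\mX^{*2} > \mathscr{C}$, and analogously on $\{y = 0\}$ and $\{z = 0\}$, so $\mathscr{G}$ never touches $\partial\mathbb{W}$. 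On $\mathbb{W} \setminus \mathscr{G}$ the above estimate yields $\mLV < 0$, so choosing $\mU$ to be a bounded open neighborhood of $\mathscr{G}$ with $\overline{\mU} \subset \mathbb{W}$ verifies $(\mH_2)$ via Remark~\ref{R1}(ii) while remaining compatible with $(\mH_1)$. Lemma~\ref{L1} then yields the stationary distribution $\pi(\cdot)$, completing the proof.
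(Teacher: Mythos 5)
Your overall route is the same as the paper's: Khasminskii's criterion via Lemma~\ref{L1} and Remark~\ref{R1}, the diffusion-matrix computation for $(\mH_1)$ (your version, which takes the infimum of $\sigma_1^2\mX^2$, $\sigma_2^2\mY^2$, $\sigma_3^2\mZ^2$ over $\overline{\mU}$, is if anything stated more carefully than the paper's), a Lyapunov function of quadratic-plus-Volterra-logarithm type centered at the endemic equilibrium for $(\mH_2)$, and the concluding ellipsoid argument, which you give correctly and for exactly the reason the paper invokes.

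The gap is in the choice of building blocks for $\mV$, and it is not a matter of tuning weights. The paper takes
\begin{equation*}
\mV = \tfrac{1}{2}\bigl(\mX-\mX^*+\mY-\mY^*+\mZ-\mZ^*\bigr)^2 \;+\; \tfrac{2\mu+\alpha}{\beta}\,\mY^*\Bigl(\tfrac{\mY}{\mY^*}-1-\ln\tfrac{\mY}{\mY^*}\Bigr) \;+\; \tfrac{2\mu+\alpha}{2\gamma}\bigl(\mZ-\mZ^*\bigr)^2,
\end{equation*}
i.e.\ the quadratic part is in the \emph{total} deviation, not in $\mX+\mY$ alone. This matters because the drift of $\mX+\mY+\mZ$ is $\Lambda-\mu(\mX+\mZ)-(\mu+\alpha)\mY$: the transfer terms $\eta\mZ$ and $\gamma\mY$ cancel along with $\beta\mX\mY$, so after substituting the equilibrium identities the cross terms come out as $-(2\mu+\alpha)(\mX-\mX^*)(\mY-\mY^*)$ and $-(2\mu+\alpha)(\mY-\mY^*)(\mZ-\mZ^*)$, which are annihilated \emph{exactly} by the weights $\tfrac{2\mu+\alpha}{\beta}$ and $\tfrac{2\mu+\alpha}{\gamma}$ on the other two blocks. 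With your square $(\mX-\mX^*+\mY-\mY^*)^2$, the drift of $\mX+\mY$ is $-\mu(\mX-\mX^*)-(\alpha+\mu+\gamma)(\mY-\mY^*)+\eta(\mZ-\mZ^*)$, so the $\mX$--$\mY$ cross term carries the coefficient $-(\alpha+2\mu+\gamma)$ (forcing the log-weight $\tfrac{\alpha+2\mu+\gamma}{\beta}$ rather than $\tfrac{2\mu+\alpha}{\beta}$), and two extra $\eta$-cross terms appear that must be absorbed by completing squares, further eroding the diagonal coefficients; dropping the factor $\tfrac12$ also doubles every It{ô} correction, e.g.\ producing $2\sigma_1^2\mX^{*2}$ instead of the $\sigma_1^2\mX^{*2}$ in $\mathscr{C}$. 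You would still obtain a bound of the form $\mLV \le -\tilde{\mathscr{D}}_1(\mX-\mX^*)^2-\tilde{\mathscr{D}}_2(\mY-\mY^*)^2-\tilde{\mathscr{D}}_3(\mZ-\mZ^*)^2+\tilde{\mathscr{C}}$ and hence \emph{a} stationary-distribution criterion, but not the one with the constants \eqref{E4.2} that the theorem asserts. The fix is simply to include $\mZ-\mZ^*$ (and the factor $\tfrac12$) in the combined square, as the paper does.
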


\begin{proof}
The system \eqref{E2.1} can be written as \eqref{E4.1} in the form
\begin{equation*}
\begin{split}
d \begin{pmatrix}
\mX(t)\\ \mY(t)\\ \mZ(t)
\end{pmatrix}
&= \begin{pmatrix}
\Lambda - \beta \mX(t)\mY(t) +\eta \mZ(t) - \mu \mX(t)\\
\beta \mX(t)\mY(t) - (\alpha + \mu + \gamma)\mY(t)\\
\gamma \mY(t) - (\mu + \eta)\mZ(t)
\end{pmatrix} dt 
+ \begin{pmatrix}
\sigma_1 \mX(t)\\ 0\\ 0
\end{pmatrix} d\mB_1(t)\\
&\quad + \begin{pmatrix}
0\\ \sigma_2 \mY(t)\\ 0 \end{pmatrix} d\mB_2(t) 
+ \begin{pmatrix} 0\\ 0\\ \sigma_3 \mZ(t)
\end{pmatrix} d\mB_3(t) 
+ \begin{pmatrix}
-\sigma_4 \mX(t)\mY(t)\\
\sigma_4 \mX(t)\mY(t)\\
0
\end{pmatrix} d\mB_4(t) .
\end{split}
\end{equation*}
In this case, the diffusion matrix is
\begin{equation*}
  \mathcal{A} = \begin{pmatrix}
  \sigma_1^2 \mX^{* 2} + \sigma_4^2 \mX^{* 2}\mY^{* 2} & -\sigma_4^2 \mX^{* 2}\mY^{* 2} & 0\\
  -\sigma_4^2 \mX^{* 2}\mY^{* 2} & \sigma_2^2 \mY^{* 2} + \sigma_4^2 \mX^{* 2}\mY^{* 2} & 0\\
  0 & 0 & \sigma_3^2 \mZ^{* 2}
\end{pmatrix} .
\end{equation*}
Let $\varpi \in \mathbb{R}^3$, then
\begin{equation*}
\begin{split}
   \sum_{i, j=1}^3 a_{i j} \varpi_i \varpi_j 
   &= (\sigma_1^2 \mX^{* 2} + \sigma_4^2 \mX^{* 2} \mY^{* 2}) \varpi_1^2 
    + (\sigma_2^2 \mY^{* 2} + \sigma_4^2 \mX^{* 2} \mY^{* 2}) \varpi_2^2 
    + \sigma_3^2 \mZ^{* 2} \varpi_3^2 - 2 \sigma_4^2 \mX^{* 2} \mY^{* 2} \varpi_1 \varpi_2\\
   &= \sigma_1^2 \mX^{* 2} \varpi_1^2+\sigma_2^2 \mY^{* 2} \varpi_2^2+\sigma_3^2 \mZ^{* 2} \varpi_3^2+\sigma_4^2 \mX^{* 2} \mY^{* 2} (\varpi_1-\varpi_2)^2\\
   &\ge \sigma_1^2 \mX^{* 2} \varpi_1^2+\sigma_2^2 \mY^{* 2} \varpi_2^2+\sigma_3^2 \mZ^{* 2} \varpi_3^2 \ge  \min(\sigma_1^2 \mX^{* 2}, \sigma_2^2 \mY^{* 2}, \sigma_3^2 \mZ^{* 2})|\varpi|^2\\
   &= \kappa |\varpi|^2,
\end{split}
\end{equation*}
which shows that the condition $(\mH_1)$ is satisfied.\\
Since $\mathscr{R}_0 > 1$, then the EE of \eqref{E1.1} is positive, and we have
\begin{equation}\label{E4.3}
    \Lambda = -\eta \mZ^* + \beta \mX^*\mY^* + \mu \mX^*, \quad \beta \mX^*\mY^* 
    = (\alpha + \mu + \gamma)\mY^* \ \text{ and } \ \gamma \mY^* = (\mu + \eta)\mZ^*.
\end{equation}
Recall that
\begin{equation}\label{E4.4}
   (x + y)^2 \le 2x^2 + 2y^2.
\end{equation}
Let
\begin{equation}\label{E4.5}
   \mV(t) = \mV_1(t) + \frac{2\mu + \alpha}{\beta}\mV_2(t) + \frac{2\mu + \alpha}{\gamma}\mV_3(t),
\end{equation}
with
\begin{align*}
 \mV_1(t) &= \frac{1}{2}\bigl(\mX(t) - \mX^* + \mY(t) - \mY^* + \mZ(t) - \mZ^*\bigr)^2,\\
 \mV_2(t) &= \mY^*\Bigl(\frac{\mY(t)}{\mY^*} - 1 - \ln\frac{\mY(t)}{\mY^*}\Bigr),\\
 \mV_3(t) &= \frac{1}{2}\bigl(\mZ(t) - \mZ^*\bigr)^2.
\end{align*}
According to the It{ô} formula, we get
\begin{equation*}
   d\mV(t) = d\mV_1(t) + \frac{2\mu + \alpha}{\beta}d\mV_2(t) + \frac{2\mu + \alpha}{\gamma}d\mV_3(t),
\end{equation*}
where
\begin{equation*}
\begin{split}
  d\mV_1(t) &= \bigl(\mX(t) - \mX^* + \mY(t) - \mY^* + \mZ(t) - \mZ^*\bigr) 
     (d\mX + d\mY + d\mZ) + \frac{1}{2}(d\mX + d\mY + d\mZ)^2 \\
&= \mLV_1(t) \,dt + \bigl(\mX(t) - \mX^* + \mY(t) - \mY^* + \mZ(t) - \mZ^*\bigr)
   \big(\sigma_1 \mX(t) \,d\mB_1(t) + \sigma_2 \mY(t) \,d\mB_2(t)\\
&   \qquad + \sigma_3 \mZ(t) \,d\mB_3(t)\big),\\
d\mV_2(t) &= \Bigl(1-\frac{\mY^*}{\mY(t)}\Bigr) d\mY(t) + \frac{\mY^*}{2 \mY^2(t)}\bigl(d \mY(t)\bigr)^2\\
&= \mLV_2 \,dt + \bigl(\mY(t)-\mY^*\bigr) \bigl(\sigma_2 \,d\mB_2(t) + \sigma_4 \mX(t) \,d\mB_4(t)\bigr),\\
   d\mV_3(t) &= \bigl(\mZ(t)-\mZ^*\bigr) d \mZ(t) + \frac{1}{2}\bigl(d \mZ(t)\bigr)^2\\
&= \mLV_3(t) \,dt + \bigl(\mZ(t)-\mZ^*\bigr) \sigma_3\mZ(t)\, d\mB_3(t),
\end{split}
\end{equation*}
with
\allowdisplaybreaks
\begin{align*}
\mLV_1(t) &= \bigl(\mX(t) - \mX^* + \mY(t) - \mY^* + \mZ(t) - \mZ^*\bigr)
             \bigl[\Lambda - \mu (\mX(t) + \mZ(t)) - (\mu + \alpha) \mY(t)\bigr]\\
&\qquad + \frac{\sigma_1^2 \mX^2(t) + \sigma_2^2 \mY^2(t) + \sigma_3^2 \mZ^2(t)}{2}\\
&\stackrel{\eqref{E4.3}}{\le} -\mu (\mX(t) - \mX^*)^2 - (\mu + \alpha) (\mY(t) - \mY^*)^2 - \mu (\mZ(t) - \mZ^*)^2 - (2\mu+\alpha) (\mX(t) - \mX^*)(\mY(t) - \mY^*)\\
&\qquad + 2\mu (\mX(t) - \mX^*)(\mZ(t) - \mZ^*) - (2\mu+\alpha) (\mY(t) - \mY^*)(\mZ(t) - \mZ^*)\\
&\qquad + \frac{1}{2}\bigl(\sigma_1^2 (\mX(t) - \mX^* + \mX^*)^2 + \sigma_2^2 (\mY(t) - \mY^* + \mY^*)^2 + \sigma_3^2 (\mZ(t) - \mZ^* + \mZ^*)^2\bigr)\\
&\stackrel{\eqref{E4.4}}{\le} -(\mu-\sigma_1^2) (\mX(t) - \mX^*)^2 - (\mu+\alpha-\sigma_2^2) (\mY(t) - \mY^*)^2 - (\mu-\sigma_3^2) (\mZ(t) - \mZ^*)^2\\ 
&\qquad - (2\mu+\alpha) (\mX(t) - \mX^*)(\mY(t) - \mY^*) + 2\mu (\mX(t) - \mX^*)(\mZ(t) - \mZ^*)\\
&\qquad - (2\mu+\alpha) (\mY(t) - \mY^*)(\mZ(t) - \mZ^*) + \sigma_1^2 \mX^{* 2} + \sigma_2^2 \mY^{* 2} + \sigma_3^2 \mZ^{* 2},\\
\mLV_2(t) &= \bigl(\mY(t)-\mY^*\bigr) \bigl[\beta \mX(t)-(\alpha+\mu+\gamma)\bigr]
                +\frac{\mY^*}{2}\bigl(\sigma_2^2+\sigma_4^2 \mX^2(t)\bigr)\\
&\stackrel{\eqref{E4.3}}{\le} \beta (\mX(t) - \mX^*)(\mY(t) - \mY^*) 
      + \frac{\mY^*}{2}\bigl(\sigma_2^2+\sigma_4^2 (\mX(t) - \mX^* + \mX^*)^2\bigr)\\
&\stackrel{\eqref{E4.4}}{\le} \sigma_4^2\mY^*(\mX(t) - \mX^*)^2 + \beta (\mX(t) - \mX^*)(\mY(t) - \mY^*) +
            \frac{\mY^*}{2}\sigma_2^2 + \sigma_4^2 \mY^*\mX^{* 2},\\
\mLV_3(t) &= (\mZ(t)-\mZ^*) \bigl(\gamma \mY(t)-(\mu+\eta) \mZ(t)\bigr) + \frac{1}{2} \sigma_3^2 \mZ^2(t)\\
&\stackrel{\eqref{E4.3}}{\le} \gamma (\mY(t) - \mY^*)(\mZ(t) - \mZ^*) - (\mu+\eta) (\mZ(t) - \mZ^*)^2 + \frac{1}{2} \sigma_3^2 (\mZ(t) - \mZ^* + \mZ^*)^2,\\
&\stackrel{\eqref{E4.4}}{\le} \gamma (\mY(t) - \mY^*)(\mZ(t) - \mZ^*) - (\mu+\eta-\sigma_3^2) (\mZ(t) - \mZ^*)^2 + \sigma_3^2 \mZ^{* 2}.
\end{align*}
Thus,
\begin{equation*}
\begin{split}
\mLV(t) =& \mLV_1(t) + \frac{2\mu + \alpha}{\beta}\mLV_2(t) + \frac{2\mu + \alpha}{\gamma}\mLV_3(t)\\
\le& -\mathscr{D}_1 (\mX(t) - \mX^*)^2 - \mathscr{D}_2 (\mY(t) - \mY^*)^2 - \mathscr{D}_3 (\mZ(t) - \mZ^*)^2 + \mathscr{C},
\end{split}
\end{equation*}
where $\mathscr{D}_1, \mathscr{D}_2, \mathscr{D}_3$ and $\mathscr{C}$ are defined in \eqref{E4.2}.
Note that if $0 < \mathscr{C} < \min(\mathscr{D}_1\mX^{* 2}, \mathscr{D}_2\mY^{* 2}, \mathscr{D}_3\mZ^{* 2})$, then the ellipsoid
\begin{equation*}
   \mathscr{U}\colon \ \mathscr{D}_1 (\mX(t) - \mX^*)^2 + \mathscr{D}_2 (\mY(t) - \mY^*)^2 + \mathscr{D}_3 (\mZ(t) - \mZ^*)^2 = \mathscr{C}
\end{equation*}
is entirely in $\mathbb{W}$. 
Choosing $\mU \subset \mathscr{U}$ so that $\Bar{\mU} \subseteq \mathscr{E}_\ell = \mathbb{W}$, so for $\mathrm{x} \in \mathscr{E}_\ell \backslash \mU$, we have $\mLV(t) \le 0$. 
This implies that $(\mH_2)$ is satisfied. 
Thus, by Lemma~\ref{L1} and remark \ref{R1}, it follows that \eqref{E2.1} has a SD $\pi(\cdot)$.
\end{proof}

\begin{remark}\label{R2}
Under the conditions outlined in Theorem \ref{T2}, the model \eqref{E2.1} has the ergodic property, which means that the positive solution converges to the EE of \eqref{E1.1}.
\end{remark}

\begin{theorem}\label{T3}
Let $\mathcal{N}(t)$ be the total population of \eqref{E2.1}. For $(\mX_0, \mY_0, \mZ_0) \in \mathbb{W}$, the solution of \eqref{E2.1} gives 
\begin{equation}\label{E4.6}
0 < \liminf_{t \to \infty}\mathcal{N}(t) \le \limsup_{t \to \infty}\mathcal{N}(t) < \infty.
\end{equation}
\end{theorem}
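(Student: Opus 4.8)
The plan is to collapse the three-dimensional system to a scalar equation for the total population $\mathcal{N}(t)=\mX(t)+\mY(t)+\mZ(t)$ and then squeeze its drift between two affine functions of $\mathcal{N}$. Summing the three lines of \eqref{E2.1} and noticing that the two $\sigma_4$-terms are exact opposites, the correlated noise cancels and one is left with
\[
   d\mathcal{N}(t) = \bigl(\Lambda - \mu\mathcal{N}(t) - \alpha\mY(t)\bigr)\,dt + \sigma_1\mX(t)\,d\mB_1(t) + \sigma_2\mY(t)\,d\mB_2(t) + \sigma_3\mZ(t)\,d\mB_3(t).
\]
Since Theorem~\ref{T1} guarantees $\mX,\mY,\mZ>0$ a.s., we have $0 \le \alpha\mY(t) \le \alpha\mathcal{N}(t)$, so the drift is trapped: $\Lambda - (\mu+\alpha)\mathcal{N} \le \Lambda - \mu\mathcal{N} - \alpha\mY \le \Lambda - \mu\mathcal{N}$. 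This two-sided bound is the engine of the whole argument.

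For the upper estimate I would apply the It{ô} formula to $e^{\mu t}\mathcal{N}(t)$. The drift collapses to $e^{\mu t}\bigl(\Lambda - \alpha\mY(t)\bigr) \le e^{\mu t}\Lambda$, so after integrating and taking expectations (localising the stochastic integral along the stopping times $\ell_n$ of Theorem~\ref{T1}, so that it is a genuine martingale of zero mean) one obtains $\mathbb{E}\,\mathcal{N}(t) \le \mathcal{N}(0)e^{-\mu t} + \frac{\Lambda}{\mu}(1 - e^{-\mu t})$, whence $\limsup_{t\to\infty}\mathbb{E}\,\mathcal{N}(t) \le \frac{\Lambda}{\mu}$. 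The lower estimate is symmetric: applying It{ô} to $e^{(\mu+\alpha)t}\mathcal{N}(t)$ and using the lower drift bound gives $\mathbb{E}\,\mathcal{N}(t) \ge \mathcal{N}(0)e^{-(\mu+\alpha)t} + \frac{\Lambda}{\mu+\alpha}(1 - e^{-(\mu+\alpha)t})$, so $\liminf_{t\to\infty}\mathbb{E}\,\mathcal{N}(t) \ge \frac{\Lambda}{\mu+\alpha} > 0$. Together these furnish boundedness of $\mathcal{N}$ in $L^1$, and via Markov's inequality its tightness.

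The hard part is upgrading these mean bounds to the pathwise statement \eqref{E4.6}. I would represent $\mathcal{N}(t)$ through the integrating factor, $\mathcal{N}(t) = \mathcal{N}(0)e^{-\mu t} + \frac{\Lambda}{\mu}(1-e^{-\mu t}) - \alpha e^{-\mu t}\int_0^t e^{\mu s}\mY(s)\,ds + e^{-\mu t}J(t)$ with $J(t)=\int_0^t e^{\mu s}\,dM(s)$ and $M$ the martingale part above, and then show that the normalised local martingale $e^{-\mu t}J(t)$ tends to zero a.s. The natural tools are the strong law of large numbers for local martingales, or the exponential martingale inequality combined with Borel--Cantelli along integer times and pathwise continuity in between; the non-positive term $-\alpha e^{-\mu t}\int_0^t e^{\mu s}\mY\,ds$ can simply be discarded for the upper bound and controlled crudely for the lower bound.

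The genuine obstacle is the circular dependence of $\langle J\rangle_t = \int_0^t e^{2\mu s}\bigl(\sigma_1^2\mX^2 + \sigma_2^2\mY^2 + \sigma_3^2\mZ^2\bigr)\,ds$ on the very quantity being bounded. Breaking this loop calls for a uniform second-moment estimate, which I would obtain by applying It{ô} to $\mathcal{N}^2$ and using $\sigma_1^2\mX^2 + \sigma_2^2\mY^2 + \sigma_3^2\mZ^2 \le \max(\sigma_1^2,\sigma_2^2,\sigma_3^2)\,\mathcal{N}^2$ together with the first-moment bound already in hand. This closes cleanly once the mean-reversion $2\mu$ dominates the noise intensity; the delicate point, and the step I expect to require the most care, is controlling $\langle J\rangle_t$ in the regime where it does not, so that the pathwise conclusion holds for all parameter values and not merely in expectation.
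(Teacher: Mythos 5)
Your reduction to the scalar equation for $\mathcal{N}$ and the two-sided drift bound $\Lambda-(\mu+\alpha)\mathcal{N}\le \Lambda-\mu\mathcal{N}-\alpha\mY\le\Lambda-\mu\mathcal{N}$ coincide with the paper's starting point, and the first-moment estimates $\liminf_t\mathbb{E}\,\mathcal{N}(t)\ge\frac{\Lambda}{\mu+\alpha}$ and $\limsup_t\mathbb{E}\,\mathcal{N}(t)\le\frac{\Lambda}{\mu}$ are correct. But Theorem~\ref{T3} is a pathwise statement, and the step you yourself flag as ``the hard part'' is exactly where the proof is missing. To get $\limsup_t\mathcal{N}(t)<\infty$ and $\liminf_t\mathcal{N}(t)>0$ a.s.\ you must show $e^{-\mu t}\int_0^t e^{\mu s}\,dM(s)\to 0$ a.s.\ (and the analogue with rate $\mu+\alpha$ for the lower bound), which requires almost-sure control of $\langle J\rangle_t=\int_0^t e^{2\mu s}\bigl(\sigma_1^2\mX^2+\sigma_2^2\mY^2+\sigma_3^2\mZ^2\bigr)\,ds$ --- a quantity comparable to the very supremum you are trying to bound. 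Your proposed bootstrap via $\mathbb{E}\,\mathcal{N}^2$ only closes when $2\mu>\max(\sigma_1^2,\sigma_2^2,\sigma_3^2)$, since the drift of $\mathcal{N}^2$ carries the term $+\max_i\sigma_i^2\,\mathcal{N}^2$ against $-2\mu\mathcal{N}^2$, whereas the theorem is asserted with no restriction on the noise intensities; you explicitly leave the complementary regime open, so the argument as written does not prove the stated result. Note also that even a uniform second-moment bound gives only $L^2$-boundedness, not a.s.\ boundedness of paths, so a further Borel--Cantelli or martingale-convergence step would still be required; and mean bounds alone say nothing about $\liminf_t\mathcal{N}(t)>0$ a.s.

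The paper sidesteps the circularity entirely: it never sends the martingale term to zero. For the lower bound it applies It\^o to $1/\mathcal{N}$, multiplies by the integrating factor $e^{t}$, and argues by contradiction: on the event $\mathfrak{B}=\{\liminf_t\mathcal{N}=0\}$, assumed to have probability at least $\varepsilon$, it introduces the stopping times $\ell_n=\inf\{t\ge 0:\mathcal{N}(t)\le 1/n\}$ and compares $\mathbb{E}\bigl(\mathcal{N}(\ell_n)^{-1}1_{\mathfrak{B}}\bigr)\ge n\varepsilon$ with an $n$-independent bound, the stochastic integral being evaluated only in expectation at a stopping time (as in the proof of Theorem~\ref{T1}) rather than pathwise; the upper bound is handled symmetrically with the factor $e^{\mu t}$. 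If you want to complete your route rather than adopt this localization-and-contradiction device, you need a strong law for the local martingale $M$ whose hypotheses can be verified without already knowing that $\mathcal{N}$ is bounded.
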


\begin{proof}
First, we will show that 
$0 < \liminf\limits_{t \to \infty}\mathcal{N}(t)$. From \eqref{E2.1}, we get
\begin{equation}\label{E4.7}
    d\mathcal{N}(t) = \bigl(\Lambda - \mu \mathcal{N}(t) -\alpha\mY(t)\bigr)\,dt
    + \sigma_1 \mX(t) \,d\mB_1(t) + \sigma_2 \mY(t) \,d\mB_2(t) + \sigma_3 \mZ(t) \,d\mB_3(t).
\end{equation}
Let $\mV(t) = \frac{1}{\mathcal{N}(t)}$. According to the It{ô} formula, we have
\begin{equation*}
\begin{split}
d\mV(t) &= -\frac{1}{\mathcal{N}^2(t)}d\mathcal{N} + \frac{1}{\mathcal{N}^3(t)}(d\mathcal{N})^2\\
        &= \Bigl(\frac{\sigma_1^2 \mX^2(t) + \sigma_2^2 \mY^2(t) + \sigma_3^2 \mZ^2(t)}{\mathcal{N}^3(t)}-\frac{\Lambda - \mu \mathcal{N}(t) -\alpha\mY(t)}{\mathcal{N}^2(t)}\Bigr) \,dt\\
&\qquad - \frac{1}{\mathcal{N}^2(t)}\bigl(\sigma_1 \mX(t)\,d\mB_1(t) + \sigma_2 \mY(t) \,d\mB_2(t) + \sigma_3 \mZ(t) \,d\mB_3(t)\bigr).
\end{split}
\end{equation*}
Since $\frac{\mX(t)}{\mathcal{N}(t)}$, $\frac{\mY(t)}{\mathcal{N}(t)}$, $\frac{\mZ(t)}{\mathcal{N}(t)} \le 1$, then
\begin{equation*}
\begin{split}
d\mV(t) e^t &\le \Bigl(\frac{\sigma_1^2 + \sigma_2^2 + \sigma_3^2}{\mathcal{N}(t)} 
            + \frac{\mu + \alpha}{\mathcal{N}(t)} -\frac{\Lambda}{\mathcal{N}^2(t)} + 1\Bigr)\, e^t\, dt\\
&\qquad - \frac{e^t}{\mathcal{N}^2(t)}\bigl(\sigma_1 \mX(t)\,d\mB_1(t) 
          + \sigma_2 \mY(t)\,d\mB_2(t) 
          + \sigma_3 \mZ(t)\,d\mB_3(t)\bigr).
\end{split}
\end{equation*}
Next, an integration by parts gives
\begin{equation}\label{E4.8}
\begin{split}
\frac{1}{\mathcal{N}(t)} &\le \frac{e^{-t}}{\mathcal{N}_0} + e^{-t}\int_0^t \mathscr{F}\bigl(\mathcal{N}(s)\bigr)\, e^s\, ds\\
& \qquad - e^{-t}\int_0^t \frac{\sigma_1 \mX(s) \,d\mB_1(s) 
   + \sigma_2 \mY(s) \,d\mB_2(s) 
   + \sigma_3 \mZ(s) \,d\mB_3(s)}{\mathcal{N}^2(s)} \,e^s \,ds,
\end{split}
\end{equation}
where $\mathscr{F}(\mathcal{N}(t)) = \frac{\sigma_1^2 + \sigma_2^2 + \sigma_3^2}{\mathcal{N}(t)} + \frac{\mu + \alpha + e^t}{\mathcal{N}(t)} -\frac{\Lambda}{\mathcal{N}^2(t)} + 1$, and for all $\mathrm{X}(t) > 0$, $\mathscr{F}(\mathrm{X}(t)) \le \xi$.

Set $\mathfrak{B} = \bigl\{\omega \, | \, \liminf\limits_{t \to \infty}\mathcal{N}(t, \omega) = 0\bigr\}$.
We need to prove that $\mathbb{P}(\mathfrak{B}) = 0$. 
Assume there exists $\varepsilon \in (0,1)$ such that $\mathbb{P}(\mathfrak{B}) > \varepsilon$. 
Define the stopping time
\begin{equation*}
    \ell_n = \inf\Bigl\{ t \ge 0 \, | \, \mathcal{N}(t, \omega) \le \frac{1}{n}, \: \omega \in \mathfrak{B}\Bigr\}, \quad n \in \mathbb{N}^*.
\end{equation*}
With the definition of $\ell_n$, it is increasing and $\lim\limits_{n \to \infty} \ell_n = \infty$.
By \eqref{E4.8}, we get
\begin{equation}\label{E4.9}
   \mathbb{E}\Bigl(\frac{1}{\mathcal{N}(\ell_n)} 1_{\mathfrak{B}}\Bigr)
   = \mathbb{E}\Bigl(\frac{e^{-\ell_n}}{\mathcal{N}_0} + e^{-\ell_n}\int_0^{\ell_n}  \mathscr{F}\bigl(\mathcal{N}(s)\bigr) \,e^s \,ds\Bigr)
   \le \frac{1}{\mathcal{N}_0} + \xi.
\end{equation}
However,
\begin{equation*}
   \mathbb{E}\Bigl(\frac{1}{\mathcal{N}(\ell_n)} 1_{\mathfrak{B}}\Bigr) 
    \ge n\mathbb{E}(1_{\mathfrak{B}}) \le n\varepsilon \to \infty \quad \text{ as } \quad n \to \infty.
\end{equation*}
This contradicts \eqref{E4.9}. 
Thus, $\mathbb{P}(\mathfrak{B}) = 0$. 
That is to say $0 < \liminf\limits_{t \to \infty}\mathcal{N}(t)$ a.s.

We now concentrate on showing that $\limsup\limits_{t \to \infty}\mathcal{N}(t) < \infty$. 
From \eqref{E4.7} we have
\begin{equation*}
\begin{split}
\mathcal{N}(t) &= \mathcal{N}_0 e^{-\mu t} + e^{-\mu t} \int_0^t\bigl(\Lambda -\alpha\mY(s)\bigr)\, e^{\mu s}\, ds\\
&\qquad + e^{-\mu t}\int_0^t \bigl(\sigma_1 \mX(s) \,d\mB_1(s) + \sigma_2 \mY(s) \,d\mB_2(s) + \sigma_3 \mZ(s) \, d\mB_3(s)\bigr)\, e^{\mu s}\, ds.
\end{split}
\end{equation*}
Using the same method, we find that $\mathbb{P}(\tilde{\mathfrak{B}}) = 0$, 
where $\tilde{\mathfrak{B}} = \bigl\{\omega \ | \ \limsup\limits_{t \to \infty}\mathcal{N}(t, \omega) = \infty\bigr\}$. 
Finally, $\limsup\limits_{t \to \infty}\mathcal{N}(t) < \infty$ a.s.
\end{proof}

\section{Asymptotic behavior around $E^f$}\label{S5}
In this section, we want to illustrate how the solution spirals around the DFE of \eqref{E1.1}. 
The central result of this section is outlined below.

\begin{theorem}\label{T4}
Let $(\mX(t), \mY(t), \mZ(t))$ be the solution of \eqref{E1.1} with ICs in $\mathbb{W}$. 
If $\mathscr{R}_0 < 1$, $\sigma_1^2 < \frac{\mu}{2}$, $\sigma_2^2 < 2(\mu+\alpha)$ and $\sigma_3^2 < \frac{(2\mu+\alpha)(\mu+\eta) + \gamma\mu}{\gamma}$ then
\begin{equation*}
   \limsup_{t \to \infty} \frac{1}{t} \mathbb{E}\int_0^t \biggl(
   \Bigl(\mX(s) - \frac{\Lambda}{\mu}\Bigr)^2 + \mY^2(s) + \mZ^2(s) \biggr) ds \le \frac{\sigma_1^2}{\mathscr{C}}\frac{\Lambda^2}{\mu^2},
\end{equation*}
where $\mathscr{C} = \min\Bigl(\frac{\mu}{2}-\sigma_1^2, \mu+\alpha-\frac{\sigma_2^2}{2}, \frac{(2\mu+\alpha)(\mu+\eta) + \gamma\mu}{\gamma} - \sigma_3^2\Bigr)$.
\end{theorem}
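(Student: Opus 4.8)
The plan is to run the same Lyapunov-function machinery used in the proof of Theorem~\ref{T2}, but centered at the disease-free equilibrium $E^f(\frac{\Lambda}{\mu},0,0)$ rather than at the endemic one. (Despite the reference to \eqref{E1.1} in the statement, the presence of the noise intensities $\sigma_i$ and of the expectation $\mathbb{E}$ shows that the object under study is the stochastic system \eqref{E2.1}.) Concretely, I would take
\begin{equation*}
  \mV = \tfrac12\Bigl(\mX-\tfrac{\Lambda}{\mu}+\mY+\mZ\Bigr)^2 + \tfrac{2\mu+\alpha}{\beta}\,\mY + \tfrac{2\mu+\alpha}{2\gamma}\,\mZ^2,
\end{equation*}
which is non-negative on $\mathbb{W}$ and is the natural degeneration of the Theorem~\ref{T2} functional as $\mY^{*},\mZ^{*}\to 0$: the logarithmic $\mY$-term collapses to a linear term in $\mY$, while the $\mZ$-term keeps its quadratic form. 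Applying the It\^o formula gives $d\mV = \mLV\,dt + (\text{martingale part})$, and the whole proof reduces to the drift bound
\begin{equation*}
  \mLV \le -\mathscr{C}\biggl(\Bigl(\mX-\tfrac{\Lambda}{\mu}\Bigr)^2 + \mY^2 + \mZ^2\biggr) + \sigma_1^2\tfrac{\Lambda^2}{\mu^2}.
\end{equation*}

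For the drift I would first differentiate the combined square. Since the total drift $d\mX+d\mY+d\mZ$ carries no $\mB_4$-contribution (the $\sigma_4$ terms cancel, which is why $\sigma_4$ is absent from $\mathscr{C}$) and equals $-\mu(\mX-\frac{\Lambda}{\mu})-(\mu+\alpha)\mY-\mu\mZ$ after using $\Lambda=\mu\cdot\frac{\Lambda}{\mu}$, the leading part of $\mLV$ is a negative quadratic form in the deviations $u=\mX-\frac{\Lambda}{\mu}$, $v=\mY$, $w=\mZ$, together with the It\^o correction $\tfrac12(\sigma_1^2\mX^2+\sigma_2^2\mY^2+\sigma_3^2\mZ^2)$. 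The linear $\mY$-term contributes $(2\mu+\alpha)uv+\frac{(2\mu+\alpha)(\alpha+\mu+\gamma)(\mathscr{R}_0-1)}{\beta}\mY$, whose cross part $uv$ exactly cancels the $uv$ produced by the square and whose remaining linear-in-$\mY$ part is $\le 0$ precisely because $\mathscr{R}_0<1$; this is the sole place that hypothesis enters. The quadratic $\mZ$-term likewise cancels the $vw$ cross term and supplies the contribution $-\tfrac{(2\mu+\alpha)(\mu+\eta)}{\gamma}\mZ^2$.

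What remains is the surviving $\mX$–$\mZ$ cross term and the noise terms. The $u$–$w$ coupling is absorbed by Young's inequality, the splitting being tuned so that it removes exactly $\tfrac{\mu}{2}$ from the $u^2$ coefficient (this is the origin of the $\tfrac{\mu}{2}-\sigma_1^2$ entry in $\mathscr{C}$). For the It\^o corrections I would use \eqref{E4.4}: because the equilibrium has $\mY^{*}=\mZ^{*}=0$, only the $\mX$-noise must be recentred, $\tfrac12\sigma_1^2\mX^2\le \sigma_1^2 u^2+\sigma_1^2\tfrac{\Lambda^2}{\mu^2}$, which feeds the $-\sigma_1^2$ into the $u^2$ coefficient and produces the lone additive constant $\sigma_1^2\tfrac{\Lambda^2}{\mu^2}$; the $\sigma_2^2,\sigma_3^2$ pieces contribute a $\tfrac{\sigma_2^2}{2}\mY^2$ and a $\sigma_3^2$-multiple of $\mZ^2$ leaving no constant (this also explains why the $\mY^2$ coefficient carries $\tfrac{\sigma_2^2}{2}$ and not $\sigma_2^2$, in contrast with $\mathscr{D}_2$ of Theorem~\ref{T2}). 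Taking the minimum of the three diagonal coefficients yields $\mathscr{C}$, and the stated smallness conditions on $\sigma_1^2,\sigma_2^2,\sigma_3^2$ guarantee $\mathscr{C}>0$.

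Finally I would integrate $d\mV$ on $[0,t]$, take expectations so the martingale part vanishes, and use $\mathbb{E}\,\mV(t)\ge 0$ to obtain
\begin{equation*}
  \mathscr{C}\,\mathbb{E}\int_0^t\Bigl(\bigl(\mX(s)-\tfrac{\Lambda}{\mu}\bigr)^2+\mY^2(s)+\mZ^2(s)\Bigr)ds \le \mV(0) + \sigma_1^2\tfrac{\Lambda^2}{\mu^2}\,t,
\end{equation*}
then divide by $\mathscr{C}t$ and pass to the $\limsup$, letting $\tfrac{\mV(0)}{\mathscr{C}t}\to 0$. The main obstacle is the drift estimate itself: completing the squares, choosing the Young splitting of the $\mX$–$\mZ$ cross term so that the advertised coefficients in $\mathscr{C}$ emerge, and carefully tracking which noise terms do and do not leave a residual constant. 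Once that inequality is secured, the integration argument is routine.
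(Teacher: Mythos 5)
Your proposal follows essentially the paper's own proof: the identical Lyapunov function $\mV = \tfrac12\bigl(\mX+\mY+\mZ-\tfrac{\Lambda}{\mu}\bigr)^2 + \tfrac{2\mu+\alpha}{\beta}\mY + \tfrac{2\mu+\alpha}{2\gamma}\mZ^2$, the same drift inequality $\mLV \le -\mathscr{C}\bigl((\mX-\tfrac{\Lambda}{\mu})^2+\mY^2+\mZ^2\bigr)+\sigma_1^2\tfrac{\Lambda^2}{\mu^2}$, and the same integrate--take-expectations--divide-by-$t$ conclusion; the paper merely asserts the drift bound ("using the same approach as Theorem~\ref{T2}") where you spell out the bookkeeping. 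The one caveat is inherited from the paper itself: the Young splitting of the $\mX$--$\mZ$ cross term that removes $\tfrac{\mu}{2}$ from the $u^2$ coefficient necessarily deposits $+2\mu\mZ^2$, and together with the It{ô} correction $\tfrac{\gamma+2\mu+\alpha}{2\gamma}\sigma_3^2\mZ^2$ this does not visibly reconcile with the stated third entry of $\mathscr{C}$ --- a gap the paper's proof leaves equally implicit.
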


\begin{proof}
Let
\begin{equation*}
    \mV(t) = \frac{1}{2}\Bigl(\mX(t) + \mY(t) + \mZ(t) - \frac{\Lambda}{\mu}\Bigr)^2 
         + \frac{2\mu + \alpha}{\beta}\mY(t) + \frac{2\mu + \alpha}{2\gamma}\mZ^2(t).
\end{equation*}
Using the same approach outlined in the demonstration of Theorem~\ref{T2}, we obtain the following result
\begin{equation*}
\begin{split}
   d\mV(t) &= \mLV(t) dt + \Bigl(\mX(t) + \mY(t) + \mZ(t) - \frac{\Lambda}{\mu}\bigr) 
         \bigl(\sigma_1 \mX(t) \,d\mB_1(t) + \sigma_2 \mY(t) \,d\mB_2(t) + \sigma_3 \mZ(t)\,d\mB_3(t)\bigr)\\
   &\qquad + \frac{2\mu + \alpha}{\beta} \bigl(\sigma_2\mY(t)\,d\mB_2(t) + \sigma_4\mX(t)\mY(t)\,d\mB_4(t)\bigr) + \frac{2\mu + \alpha}{\gamma} \sigma_3\mZ^2(t) \,d\mB_3(t),
\end{split}
\end{equation*}
where
\begin{equation*}
\begin{split}
   \mLV(t) &\le \sigma_1^2\Bigl(\frac{\Lambda}{\mu}\Bigr)^2 - \Bigl(\frac{\mu}{2}-\sigma_1^2\Bigr) 
   \Bigl(\mX(t) - \frac{\Lambda}{\mu}\Bigr)^2 - \Bigl(\mu+\alpha-\frac{\sigma_2^2}{2}\Bigr) \mY^2(t)\\
   &\qquad - \Bigl(\frac{(2\mu+\alpha)(\mu+\eta) + \gamma\mu}{\gamma} - \sigma_3^2\Bigr) \mZ^2(t).
\end{split}
\end{equation*}
Thus,
\begin{equation*}
    \mathbb{E}\mV(t) \le \mV(0) + \sigma_1^2 \Bigl(\frac{\Lambda}{\mu}\Bigr)^2t 
      - \mathscr{C}\mathbb{E}\int_0^t\biggl(\Bigl(\mX(s) - \frac{\Lambda}{\mu}\Bigr)^2 + \mY^2(s) + \mZ^2(s)\biggr) \,ds,
\end{equation*}
$\mathscr{C} = \min\Bigl(\frac{\mu}{2}-\sigma_1^2, \mu+\alpha-\frac{\sigma_2^2}{2}, \frac{(2\mu+\alpha)(\mu+\eta) + \gamma\mu}{\gamma} - \sigma_3^2\Bigr)$. Afterwards, we obtain
\begin{equation*}
  \limsup_{t \to \infty} \frac{1}{t} \mathbb{E}\int_0^t \biggl(
   \Bigl(\mX(s) - \frac{\Lambda}{\mu}\Bigr)^2 + \mY^2(s) + \mZ^2(s) \biggr) \,ds \le \frac{\sigma_1^2}{\mathscr{C}}\frac{\Lambda^2}{\mu^2}.
\end{equation*}
Finally, the proof is complete.
\end{proof}

\begin{remark}\label{R3}
Suppose that $\mathscr{R}_0 < 1$ and $\sigma_1 = 0$. 
In this case, the DFE of \eqref{E2.1} is stochastically asymptotically stable if $\sigma_2^2 < 2(\mu+\alpha)$ and $\sigma_3^2 < \frac{(2\mu+\alpha)(\mu+\eta) + \gamma\mu}{\gamma}$.
\end{remark}

\section{Extinction of the Disease}\label{S6}
In deterministic models, the basic reproduction number $\mathscr{R}_0$ determines the persistence or extinction of the disease. If $\mathscr{R}_0 < 1$, the disease is eliminated, whereas if $\mathscr{R}_0 > 1$, the disease persists in the population. 
However, in this section we will show that if the noise is sufficiently large, the disease will become extinct for the stochastic problem \eqref{E2.1}, even though it may persist for its deterministic version \eqref{E1.1}.

\begin{lemma}[See {\cite[Page 5]{Mao1992}}]\label{L2}
Let $\bigl(\mathcal{M}(t)\bigr)_{t \ge 0}$ be a continuous local martingale with the value $\mathcal{M}(0) = \mathcal{M}_0 = 0$.
Let $\mathscr{G} > 1$, $(\ell_m)$ and $(\vartheta_m)$ are two sequences of $\mathbb{R}_+$ with $\ell_m \to \infty$. Then, for almost all $\omega \in \mathfrak{B}$, there exists a random integer
$m_0(\omega)$ such that, for all $m \ge m_0$,
\begin{equation*}
   \mathcal{M}(t) \le \frac{\vartheta_m}{2} \bigl\langle\mathcal{M}(t), \mathcal{M}(t)\bigr\rangle + \mathscr{G}\frac{\ln (m)}{\vartheta_m}, 
   \quad 0 \le t \le \ell_m.
\end{equation*}
\end{lemma}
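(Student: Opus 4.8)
The plan is to derive this almost-sure bound from the exponential martingale inequality combined with the first Borel--Cantelli lemma. First I would recall the fundamental exponential estimate for a continuous local martingale $\mathcal{M}$ with $\mathcal{M}(0)=0$: for any positive constants $T$, $a$, $b$,
\[
\mathbb{P}\Bigl\{\sup_{0\le t\le T}\bigl[\mathcal{M}(t) - \tfrac{a}{2}\langle\mathcal{M}(t),\mathcal{M}(t)\rangle\bigr] > b\Bigr\} \le e^{-ab}.
\]
This is established by introducing the process $\mathcal{E}(t) = \exp\bigl(a\mathcal{M}(t) - \tfrac{a^2}{2}\langle\mathcal{M}(t),\mathcal{M}(t)\rangle\bigr)$, which is a non-negative local martingale and hence a supermartingale with $\mathcal{E}(0)=1$, and then applying Doob's maximal inequality for non-negative supermartingales to the event $\sup_{0\le t\le T}\mathcal{E}(t)\ge e^{ab}$, noting that this event coincides with the one displayed above.

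Next, for each fixed $m$ I would apply this estimate on the horizon $[0,\ell_m]$ with the specific choices $a=\vartheta_m$ and $b=\mathscr{G}\tfrac{\ln m}{\vartheta_m}$. With these choices the exponent becomes $ab=\mathscr{G}\ln m$, so the right-hand side reduces to $e^{-\mathscr{G}\ln m}=m^{-\mathscr{G}}$. Denoting by $A_m$ the event appearing inside the probability, we thus obtain $\mathbb{P}(A_m)\le m^{-\mathscr{G}}$ for every $m$.

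Since $\mathscr{G}>1$, the series $\sum_m m^{-\mathscr{G}}$ converges, and therefore $\sum_m \mathbb{P}(A_m)<\infty$. The first Borel--Cantelli lemma then guarantees that, for almost all $\omega\in\mathfrak{B}$, only finitely many of the events $A_m$ occur; equivalently, there exists a random integer $m_0(\omega)$ such that $\omega\notin A_m$ for every $m\ge m_0$. Unwinding the definition of $A_m$, this means precisely that for all $m\ge m_0$ and all $0\le t\le \ell_m$,
\[
\mathcal{M}(t) \le \frac{\vartheta_m}{2}\langle\mathcal{M}(t),\mathcal{M}(t)\rangle + \mathscr{G}\frac{\ln m}{\vartheta_m},
\]
which is exactly the asserted inequality.

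The main obstacle is the rigorous justification of the exponential estimate rather than the Borel--Cantelli bookkeeping, which is routine. One must confirm that $\mathcal{E}(t)$ is genuinely a supermartingale --- a non-negative local martingale is a supermartingale by Fatou's lemma even when it is not a true martingale --- and that Doob's maximal inequality is legitimately applied on the finite horizon $[0,\ell_m]$. In the full local-martingale generality this typically requires a localization argument through a sequence of stopping times reducing $\mathcal{M}$ to bounded martingales, followed by passage to the limit; once this estimate is secured, the parameter substitution and the summability argument complete the proof immediately.
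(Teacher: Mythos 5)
The paper does not prove this lemma itself---it imports it verbatim from the cited reference (Mao, 1992), where it is established exactly as you propose: the exponential martingale inequality, obtained from the non-negative local (hence super-) martingale $\exp\bigl(a\mathcal{M}(t)-\tfrac{a^2}{2}\langle\mathcal{M},\mathcal{M}\rangle(t)\bigr)$ via Doob's maximal inequality, followed by the substitution $a=\vartheta_m$, $b=\mathscr{G}\ln m/\vartheta_m$ and the first Borel--Cantelli lemma using $\sum_m m^{-\mathscr{G}}<\infty$ for $\mathscr{G}>1$. Your argument is correct and is essentially the standard proof of this result; no gaps beyond the localization details you already flag.
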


Theorem~\ref{T5} provides a criterion for disease eradication based on the interplay between noise intensities and system parameters.

\begin{theorem}\label{T5}
Let $\bigl(\mX(t), \mY(t), \mZ(t)\bigr)$ be the solution of \eqref{E2.1} with $(\mX_0, \mY_0, \mZ_0) \in \mathbb{W}$. Then,
\begin{equation*}
     \limsup_{t \to \infty} \frac{\ln\mY(t)}{t} < \frac{\beta^2}{2\sigma_4^2} - (\alpha + \mu + \gamma) - \frac{\sigma_2^2}{2} \ \ a.s.
\end{equation*}
If $(\alpha + \mu + \gamma) + \frac{\sigma_2^2}{2} > \frac{\beta^2}{2\sigma_4^2}$, then $\mY(t)$ 
 will exponentially go to zero with probability one.
\end{theorem}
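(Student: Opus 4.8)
The plan is to apply the It\^o formula to $\ln\mY(t)$ so as to convert the multiplicative dynamics of the infected compartment into an additive, and ultimately linear-in-$t$, estimate. Reading off the second line of \eqref{E2.1} and using that $\mB_2$ and $\mB_4$ are independent (so the diffusion of $\mY$ contributes a quadratic variation $(\sigma_2^2+\sigma_4^2\mX^2(t))\mY^2(t)$), I would obtain
\begin{equation*}
  d\ln\mY(t) = \Bigl[\beta\mX(t) - (\alpha+\mu+\gamma) - \frac{\sigma_2^2 + \sigma_4^2\mX^2(t)}{2}\Bigr]dt + \sigma_2\,d\mB_2(t) + \sigma_4\mX(t)\,d\mB_4(t).
\end{equation*}

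The decisive step is to notice that the $\mX$-dependent part of the drift is a concave (downward) parabola in $\mX$; completing the square gives the uniform bound $\beta\mX - \tfrac{1}{2}\sigma_4^2\mX^2 \le \tfrac{\beta^2}{2\sigma_4^2}$, with the maximum attained at $\mX=\beta/\sigma_4^2$. Consequently the entire drift is dominated by the constant $\tfrac{\beta^2}{2\sigma_4^2} - (\alpha+\mu+\gamma) - \tfrac{\sigma_2^2}{2}$, so that after integrating from $0$ to $t$ and dividing by $t$,
\begin{equation*}
  \frac{\ln\mY(t)}{t} \le \frac{\ln\mY_0}{t} + \frac{\beta^2}{2\sigma_4^2} - (\alpha+\mu+\gamma) - \frac{\sigma_2^2}{2} + \frac{M_1(t)+M_2(t)}{t},
\end{equation*}
with the continuous local martingales $M_1(t)=\int_0^t\sigma_4\mX(s)\,d\mB_4(s)$ and $M_2(t)=\sigma_2\mB_2(t)$, both vanishing at $t=0$.

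It then remains to prove that $\limsup_{t\to\infty}\bigl(M_1(t)+M_2(t)\bigr)/t\le 0$ a.s. For $M_2$ this is immediate since $\mB_2(t)/t\to 0$ a.s. For $M_1$ the quadratic variation is $\langle M_1,M_1\rangle_t=\int_0^t\sigma_4^2\mX^2(s)\,ds$, which is not deterministic; here I would invoke Theorem~\ref{T3}, guaranteeing $\limsup_{t\to\infty}\mathcal{N}(t)<\infty$ a.s. and hence the almost sure boundedness of $\mX(t)\le\mathcal{N}(t)$. This bounds $\langle M_1,M_1\rangle_t$ by a constant multiple of $t$, whence either the strong law of large numbers for martingales, or Lemma~\ref{L2} applied to $M_1$ along $\ell_m=m$ with a small parameter $\vartheta_m\downarrow 0$ (so that $\tfrac{\vartheta_m}{2}\langle M_1,M_1\rangle_t=O(\vartheta_m t)$ while the remainder $\mathscr{G}\tfrac{\ln m}{\vartheta_m}=o(t)$ on $[m-1,m]$), gives $M_1(t)/t\to 0$ a.s. Passing to the $\limsup$ then yields the asserted estimate.

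Finally, under the hypothesis $(\alpha+\mu+\gamma)+\tfrac{\sigma_2^2}{2}>\tfrac{\beta^2}{2\sigma_4^2}$ the constant on the right-hand side is strictly negative, say $-\lambda$ with $\lambda>0$; thus $\ln\mY(t)\le -\tfrac{\lambda}{2}t$ for all sufficiently large $t$ a.s., i.e.\ $\mY(t)\le e^{-\lambda t/2}\to 0$, which is the claimed exponential extinction with probability one. I expect the main obstacle to be the martingale term $M_1$: its quadratic variation depends on the sample path of $\mX$, so a careless estimate of $\int_0^t\mX^2\,ds$ would reintroduce a term growing linearly in $t$ and destroy the decay rate. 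Controlling it cleanly is exactly the place where the a priori bound of Theorem~\ref{T3} (equivalently, the exponential inequality of Lemma~\ref{L2}) is indispensable.
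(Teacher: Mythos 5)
Your proposal is correct, and the skeleton (It\^o applied to $\ln\mY$, completion of the square in $\mX$, the strong law for $\mB_2(t)/t$) coincides with the paper's. The genuine difference is how the troublesome martingale $M_1(t)=\sigma_4\int_0^t\mX(s)\,d\mB_4(s)$ is handled. The paper does \emph{not} complete the square with the full coefficient $\tfrac{\sigma_4^2}{2}$: it first applies the exponential martingale inequality (Lemma~\ref{L2}) with a fixed $\vartheta\in(0,1)$ to dominate $M_1(t)$ by $\tfrac{\vartheta\sigma_4^2}{2}\int_0^t\mX^2\,ds+\tfrac{2\ln m}{\vartheta}$, absorbs the quadratic-variation term into the drift, and only then completes the square with the reduced coefficient $\tfrac{(1-\vartheta)\sigma_4^2}{2}$, arriving at $\tfrac{\beta^2}{2(1-\vartheta)\sigma_4^2}$. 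This is self-contained (no a priori bound on $\mX$ is needed), at the price of the $\vartheta$-dependent constant; note in passing that the paper's closing inequality $\tfrac{\beta^2}{2(1-\vartheta)\sigma_4^2}<\tfrac{\beta^2}{2\sigma_4^2}$ is written in the wrong direction, and one really has to send $\vartheta\downarrow0$ to recover the stated constant. You instead keep the full $-\tfrac{\sigma_4^2}{2}\mX^2$ in the square-completion, which immediately yields the clean constant $\tfrac{\beta^2}{2\sigma_4^2}$, and then dispose of $M_1(t)/t$ by invoking the a.s.\ boundedness of $\mX(t)\le\mathcal{N}(t)$ from Theorem~\ref{T3} together with the strong law of large numbers for continuous local martingales (since $\langle M_1\rangle_t=O(t)$ pathwise); your fallback of Lemma~\ref{L2} with $\vartheta_m\downarrow0$ slowly enough that $\ln m/\vartheta_m=o(m)$ works for the same reason. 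Your route buys a sharper, $\vartheta$-free estimate and a cleaner limit, but it leans on Theorem~\ref{T3} (whose upper-bound half is itself only sketched in the paper); the paper's route is weaker term by term but needs nothing beyond Lemma~\ref{L2}. Either way the extinction conclusion follows, since only the sign of the constant matters.
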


\begin{proof}
Put $\mV(t) = \ln \mY(t)$. According to the It{ô} formula, we get
\begin{equation*}
\begin{split}
   d\mV(t) &= \frac{1}{\mY(t)} \,d\mY - \frac{1}{2\mY^2(t)} \,(d\mY)^2\\
    &= \Bigl(\beta\mX(t) - (\alpha+\mu+\gamma) - \frac{\sigma_4^2 \mX^2(t)}{2} - \frac{\sigma_2^2}{2}\Bigr)\, dt + \sigma_4 \mX(t) \,d\mB_4(t) + \sigma_2 \,d\mB_2(t).
\end{split}
\end{equation*}
Next,
\begin{equation}\label{E5.1}
    \ln \mY(t) = \ln\mY_0 + \int_0^t \Bigl(\beta\mX(s) - (\alpha+\mu+\gamma) - \frac{\sigma_4^2 \mX^2(s)}{2} - \frac{\sigma_2^2}{2}\Bigr) \,dt + \mathcal{M}(t) + \sigma_2 \,\mB_2(t),
\end{equation}
where $\mathcal{M}(t) = \sigma_4\int_0^t \mX(s) d\mB_4(s)$ is a continuous local martingale with
\begin{equation*}
    \bigl\langle\mathcal{M}(t), \mathcal{M}(t)\bigr\rangle = \sigma_4^2 \int_0^t \mX^2(s)\, ds.
\end{equation*}
Choosing $\mathscr{G} = 2 > 1$, $\vartheta_m = \vartheta > 0$ and $\ell_m = m$, by Lemma \ref{L2} we have
\begin{equation}\label{E5.2}
    \mathcal{M}(t) \le \frac{\vartheta \sigma_4^2}{2} \int_0^t \mX^2(s) ds + \frac{2\ln (m)}{\vartheta}, \quad 0
\le t \le m.
\end{equation}
Using \eqref{E5.1} and \eqref{E5.2}, we get
\begin{equation}\label{E5.3}
   \ln \mY(t) < \ln\mY_0 + \int_0^t \Bigl(\beta\mX(s) - \frac{(1-\vartheta)\sigma_4^2}{2}\mX^2(s) - (\alpha+\mu+\gamma) - \frac{\sigma_2^2}{2}\Bigr) \,dt
   + \frac{2\ln (m)}{\vartheta} + \sigma_2 \,\mB_2(t).
\end{equation}
We have
\begin{equation*}
\begin{split}
- \Bigl(\frac{(1-\vartheta)\sigma_4^2}{2}\mX^2(s) - \beta\mX(s)\Bigr) 
&= - \frac{(1-\vartheta)\sigma_4^2}{2} \Bigl(\mX^2(s) - \frac{2\beta}{(1-\vartheta)\sigma_4^2}\mX(s)\Bigr)\\
&= - \frac{(1-\vartheta)\sigma_4^2}{2}\Bigl(\mX^2(s) - \frac{\beta}{(1-\vartheta)\sigma_4^2}\Bigr)^2 
 + \frac{\beta^2}{2(1-\vartheta)\sigma_4^2}\\
&\le \frac{\beta^2}{2(1-\vartheta)\sigma_4^2}.
\end{split}
\end{equation*}
Then the inequality \eqref{E5.3} becomes
\begin{equation*}
\ln \mY(t) < \ln\mY_0 + \Bigl(\frac{\beta^2}{2(1-\vartheta)\sigma_4^2} - (\alpha+\mu+\gamma) - \frac{\sigma_2^2}{2}\Bigr)t + \frac{2\ln (m)}{\vartheta} + \sigma_2 \mB_2(t).
\end{equation*}
For $m-1 \le t \le m$, we get
\begin{equation*}
    \frac{\ln \mY(t)}{t} < \frac{\ln\mY_0}{t} + \frac{\beta^2}{2(1-\vartheta)\sigma_4^2} - (\alpha+\mu+\gamma) - \frac{\sigma_2^2}{2} + \frac{2\ln (m)}{\vartheta (m-1)} + \sigma_2 \frac{\mB_2(t)}{t}.
\end{equation*}
By the strong law of large numbers (see \cite[page 12]{Mao2007}) we get $\lim\limits_{t \to \infty} \frac{\mB_2(t)}{t} = 0$. So if $m \to \infty$, then $t \to \infty$. Thus,
\begin{equation*}
\limsup_{t \to \infty}\frac{\ln \mY(t)}{t} \le \frac{\beta^2}{2(1-\vartheta)\sigma_4^2} - (\alpha+\mu+\gamma) - \frac{\sigma_2^2}{2} < \frac{\beta^2}{2\sigma_4^2} - (\alpha+\mu+\gamma) - \frac{\sigma_2^2}{2}.
\end{equation*}
\end{proof}

When $\mathscr{R}_0 > 1$, the solution of \eqref{E2.1} tends to the EE of \eqref{E1.1}, as discussed in Remark~\ref{R2}. 
We have the following corollary

\begin{corollary}\label{C1}
The disease will die out exponentially regardless of the value of $\mathscr{R}_0$ as long as $\sigma_2$ and $\sigma_4$  are sufficiently large ensuring that $(\alpha + \mu + \gamma) + \frac{\sigma_2^2}{2} > \frac{\beta^2}{2\sigma_4^2}$. 
This means that large noises can lead to disease extinction.
\end{corollary}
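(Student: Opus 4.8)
The plan is to obtain the corollary directly from Theorem~\ref{T5}, supplemented by one elementary observation about the two noise intensities $\sigma_2$ and $\sigma_4$. First I would invoke the almost-sure estimate furnished by Theorem~\ref{T5},
\[
\limsup_{t\to\infty}\frac{\ln\mY(t)}{t} < \frac{\beta^2}{2\sigma_4^2} - (\alpha+\mu+\gamma) - \frac{\sigma_2^2}{2},
\]
and note that the standing hypothesis $(\alpha+\mu+\gamma)+\frac{\sigma_2^2}{2} > \frac{\beta^2}{2\sigma_4^2}$ is precisely the statement that the right-hand side is strictly negative. Writing it as $-\lambda$ with $\lambda>0$, we have $\limsup_{t\to\infty}\frac{\ln\mY(t)}{t}\le-\lambda$ almost surely.

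Next I would convert this negative top Lyapunov exponent into genuine exponential extinction. By the definition of the limit superior, for almost every $\omega$ there is a random time $T(\omega)$ with $\frac{\ln\mY(t)}{t}\le-\frac{\lambda}{2}$ for all $t\ge T(\omega)$; hence $\mY(t)\le e^{-\lambda t/2}$ eventually, so $\mY(t)\to0$ exponentially fast with probability one. This is the assertion that the infective compartment dies out.

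The substantive content of the corollary is the clause \emph{regardless of the value of} $\mathscr{R}_0$. Here I would point out that $\mathscr{R}_0=\frac{\beta\Lambda}{\mu(\alpha+\mu+\gamma)}$ contains neither $\sigma_2$ nor $\sigma_4$, so the deterministic parameters (and with them $\mathscr{R}_0$) may be frozen while the intensities are tuned freely. Since $\frac{\sigma_2^2}{2}\to\infty$ as $\sigma_2\to\infty$ and $\frac{\beta^2}{2\sigma_4^2}\to0$ as $\sigma_4\to\infty$, the extinction inequality can always be enforced by enlarging either intensity. In particular, in the deterministic-persistence regime $\mathscr{R}_0>1$ (where Remark~\ref{R2} predicts convergence to the endemic equilibrium), sufficiently strong fluctuations still drive $\mY$ to zero.

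I expect no genuine obstacle here: the entire analytic difficulty — the martingale bound of Lemma~\ref{L2} and the completion-of-the-square estimate controlling $\beta\mX(s)-\frac{(1-\vartheta)\sigma_4^2}{2}\mX^2(s)$ — has already been discharged inside Theorem~\ref{T5}. The only point that merits care is the last one, namely verifying that $\sigma_2$ and $\sigma_4$ are decoupled from $\mathscr{R}_0$, which is what legitimizes the phrase ``regardless of the value of $\mathscr{R}_0$'' and thereby the paper's message that large noise can override deterministic persistence.
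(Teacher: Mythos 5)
Your proposal is correct and matches the paper's (implicit) argument: the corollary is stated as an immediate consequence of Theorem~\ref{T5}, exactly as you deduce it, and your extra step converting the strictly negative $\limsup$ of $\frac{\ln\mY(t)}{t}$ into pathwise exponential decay is a valid and welcome elaboration. Your observation that $\mathscr{R}_0$ contains neither $\sigma_2$ nor $\sigma_4$, so the extinction condition can be enforced independently of the deterministic regime, is precisely the point the paper intends by ``regardless of the value of $\mathscr{R}_0$.''
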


\section{Numerical Simulations}\label{S7}
In this section, we present numerical simulations in Python to illustrate how noise affects the dynamics of the proposed SIRS model. 
We use the Milstein method \cite{Higham2001}
to perform these simulations. 
Consequently, for $t = 0, \Delta t, 2\Delta t, \dots, n\Delta t$, the SDE model \eqref{E2.1} can be discretized as follows
\begin{equation}\label{E7.1}
\begin{split}
\mX_{k+1} &= \mX_k + (\Lambda-\beta \mX_k \mY_k + \eta\mZ_k - \mu \mX_k) \Delta t
      +\mX_k\Bigl[\sigma_1 \xi_{1, k} \sqrt{\Delta t}+\frac{1}{2} \sigma_1^2(\xi_{1, k}^2-1) \Delta t\Bigr]\\
&\qquad +\mX_k \mY_k \Bigl[\sigma_4 \xi_{4, k} \sqrt{\Delta t}+\frac{1}{2} \sigma_4^2(\xi_{4, k}^2-1) \Delta t\Bigr], \\
\mY_{k+1} &=  \mY_k + \bigl[\beta \mX_k \mY_k-(\gamma+\mu+\alpha) \mY_k\bigr] \Delta t
     +\mY_k \Bigl[\sigma_2 \xi_{2, k} \sqrt{\Delta t}+\frac{1}{2} \sigma_2^2(\xi_{2, k}^2-1) \Delta t\Bigr]\\
&\qquad + \mX_k \mY_k \Bigl[\sigma_4 \xi_{4, k} \sqrt{\Delta t}+\frac{1}{2} \sigma_4^2(\xi_{4, k}^2-1) \Delta t\Bigr], \\
\mZ_{k+1} &= \mZ_k + \bigl(\gamma \mY_k-(\mu+\eta) \mZ_k\bigr) \Delta t
     +\mZ_k \Bigl[\sigma_3 \xi_{3, k} \sqrt{\Delta t}+\frac{1}{2} \sigma_3^2(\xi_{3, k}^2-1) \Delta t\Bigr].
\end{split}
\end{equation}
Here, $\Delta t$ represents the time increment, $\xi_{1, k}, \xi_{2, k}, \xi_{3, k}$, and $\xi_{4, k}$ are $N(0, 1)$-distributed independent random variables with $k = 1, 2, \dots, n$. 
Based on \cite{Anderson1979, Cai2015, Lan2019, Zinihi2024}, the Table~\ref{Tab1}  presents the values of all parameters used in this section.

\setlength{\tabcolsep}{0.5cm}
\begin{table}[hbtp]
\centering
\caption{Parameter values and initial conditions in numerical simulations for \eqref{E2.1}.}\label{Tab1}
\begin{tabular}{|c|c|c|}
\hline
Symbol & Description & Value\\
\hline 
\hline $\alpha$ & Disease-induced death rate & 0.006\\
\hline $\beta$ & Effective contact rate & 0.013\\
\hline $\eta$ & Immunity loss rate & 0.023\\
\hline $\mu$ & Natural death rate & 0.05 (For $\mathscr{R}_0 < 1$)\\
& & 0.006 (For $\mathscr{R}_0 > 1$)\\
\hline $\gamma$ & Recovery rate of $\mY$ & 0.04\\
\hline $\Lambda$ & Recruitment rate of population & 0.33\\
\hline $\mX_{0}$ & Initial susceptible individuals & 10\\
\hline $\mY_{0}$ & Initial infected individuals & 5\\
\hline $\mZ_{0}$ & Initial removed individuals & 2\\
\hline $T$ & Final time & 400\\
\hline
\end{tabular}
\end{table}

In Figures \ref{F1} and \ref{F2}, when all parameters $\sigma_1 = \sigma_2 = \sigma_3 = \sigma_4 = 0$, systems \eqref{E1.1} and \eqref{E2.1} are the same. Under these conditions, Figure \ref{F1} illustrates that when $\mathscr{R}_0 < 1$, the DFE is globally asymptotically stable. Conversely, Figure \ref{F2} shows that when $\mathscr{R}_0 > 1$, the EE becomes globally asymptotically stable.

\begin{figure}[hbtp]
\centering
\includegraphics[height=7.2cm]{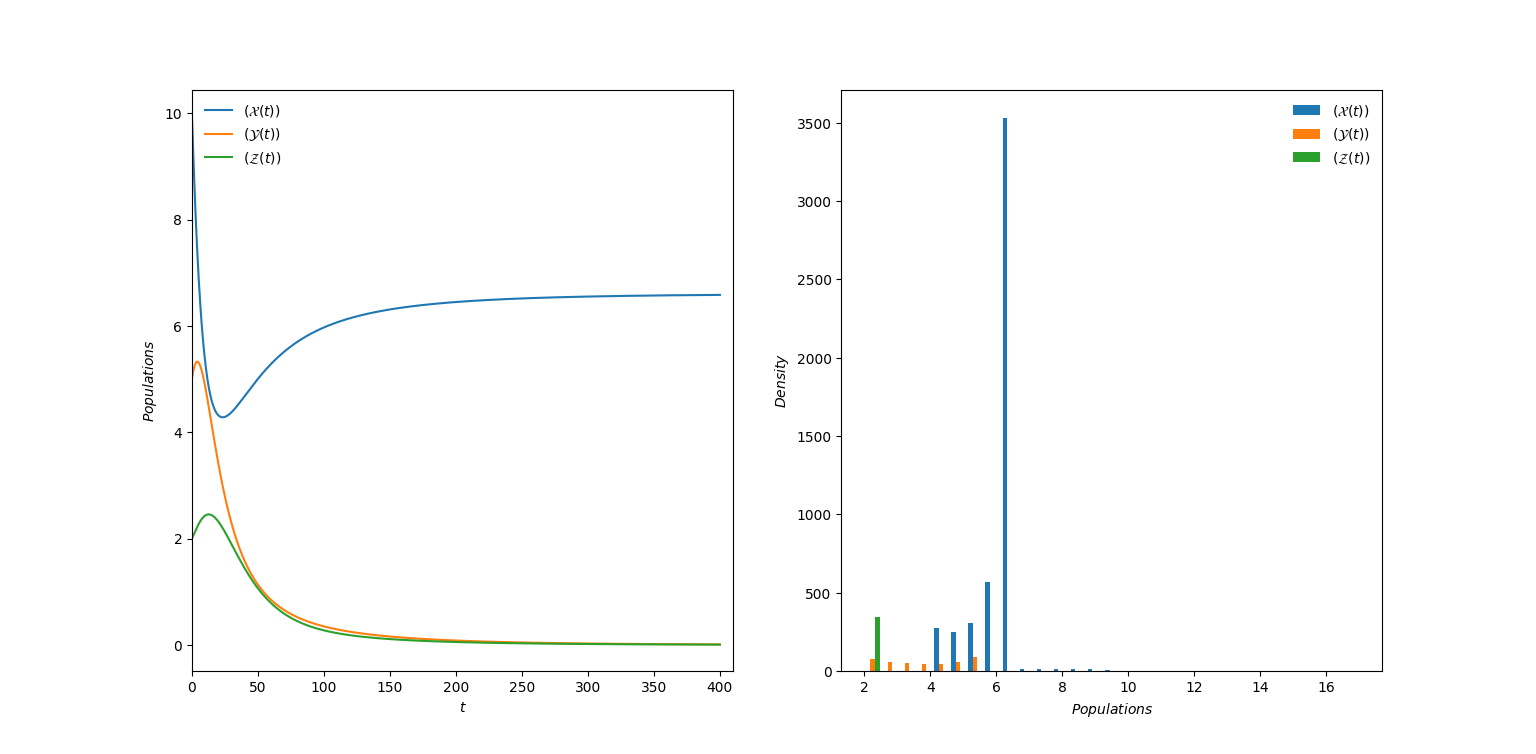}
\caption{Transmission dynamics of the disease for $\mathscr{R}_0 < 1$, with $\sigma_1 = \sigma_2 = \sigma_3 = \sigma_4 = 0$.}\label{F1}
\end{figure}

\begin{figure}[hbtp]
\centering
\includegraphics[height=7.2cm]{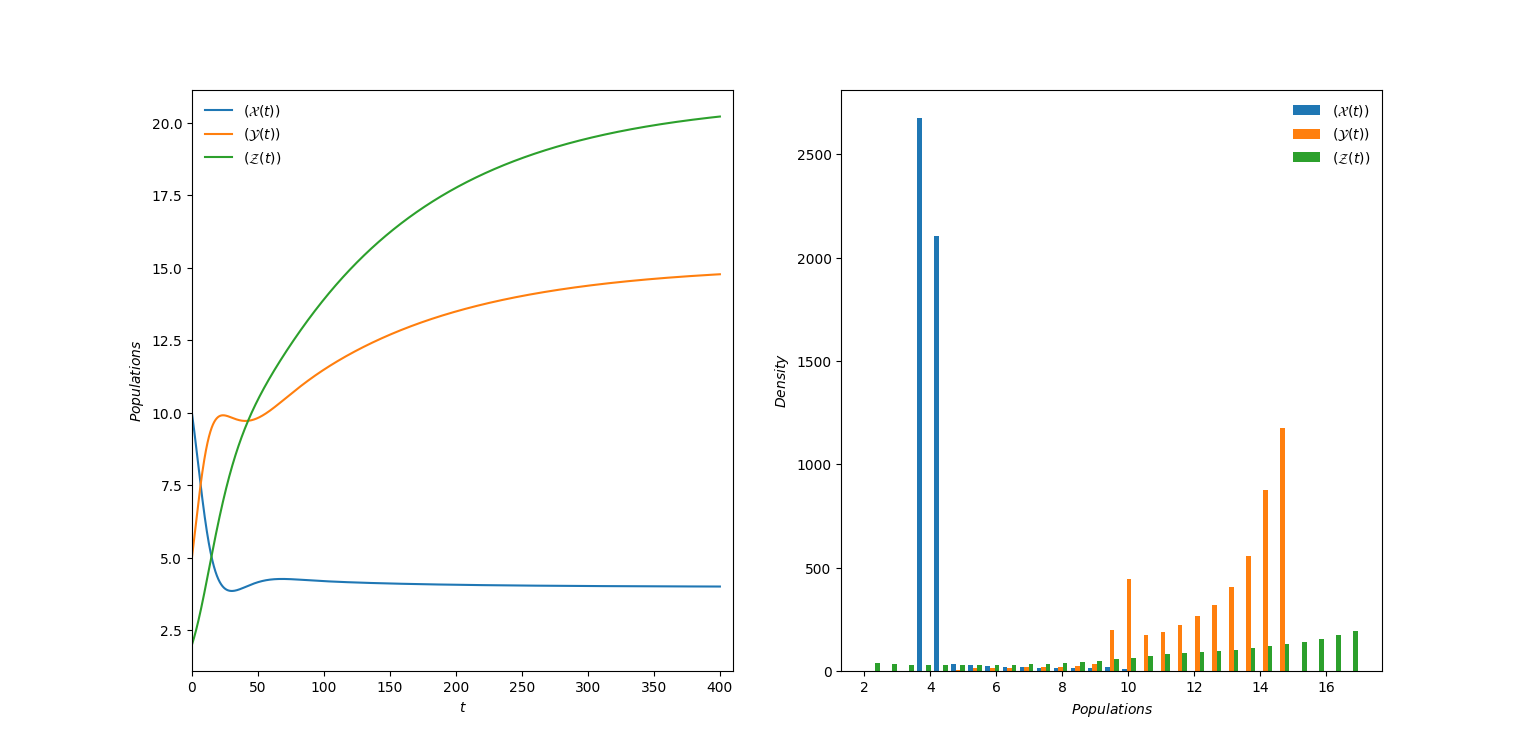}
\caption{Transmission dynamics of the disease for $\mathscr{R}_0 > 1$, with $\sigma_1 = \sigma_2 = \sigma_3 = \sigma_4 = 0$.}\label{F2}
\end{figure}

Figures \ref{F3}--\ref{F8} support the theoretical results described in the previous sections, in particular Theorem~\ref{T2}, Theorem~\ref{T3}, Remark~\ref{R3}, Theorem~\ref{T5}, and Corollary~\ref{C1}, about scenarios where $\mathscr{R}_0$ is both less than and greater than 1, with different values of $\sigma_1$, $\sigma_2$, $\sigma_3$ and $\sigma_4$. 
It is worth noting that the introduction of randomness in shaping the course of infectious diseases has significant implications for both the theoretical and practical aspects of this study. 
The magnitudes of $\sigma_1$, $\sigma_2$, $\sigma_3$, $\sigma_4$, along with their corresponding Brownian motions, play a pivotal role in either propagating the epidemic or leading to its extinction.

\begin{figure}[hbtp]
\centering
\includegraphics[height=8cm]{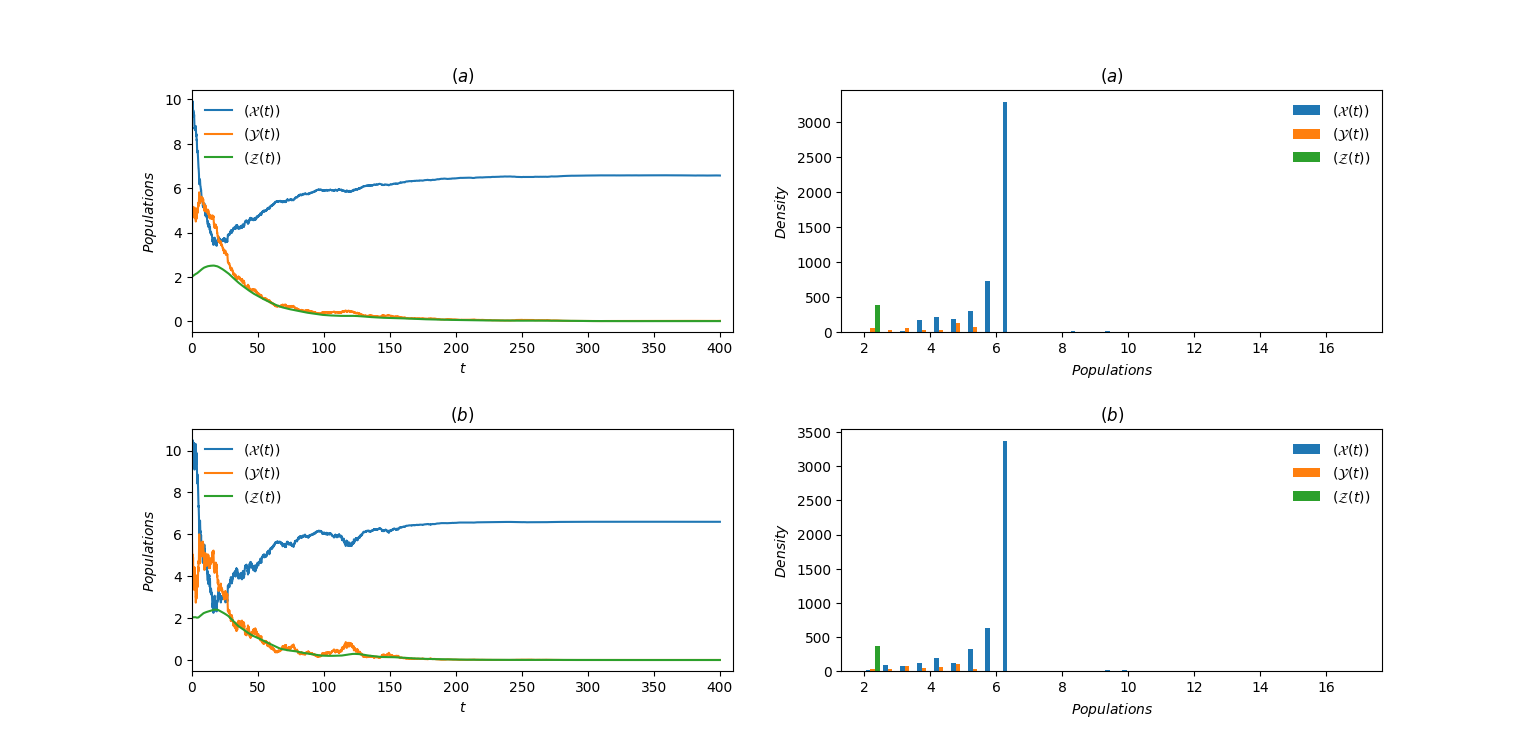}
\caption{Transmission dynamics of the disease for $\mathscr{R}_0 < 1$. Group ($a$): $\sigma_1 = \sigma_2 = \sigma_3 = 0$ and $\sigma_4 = 0.01$. Group ($b$): $\sigma_1 = \sigma_2 = \sigma_3 = 0$ and $\sigma_4 = 0.03$.}\label{F3}
\end{figure}

\begin{figure}[hbtp]
\centering
\includegraphics[height=8cm]{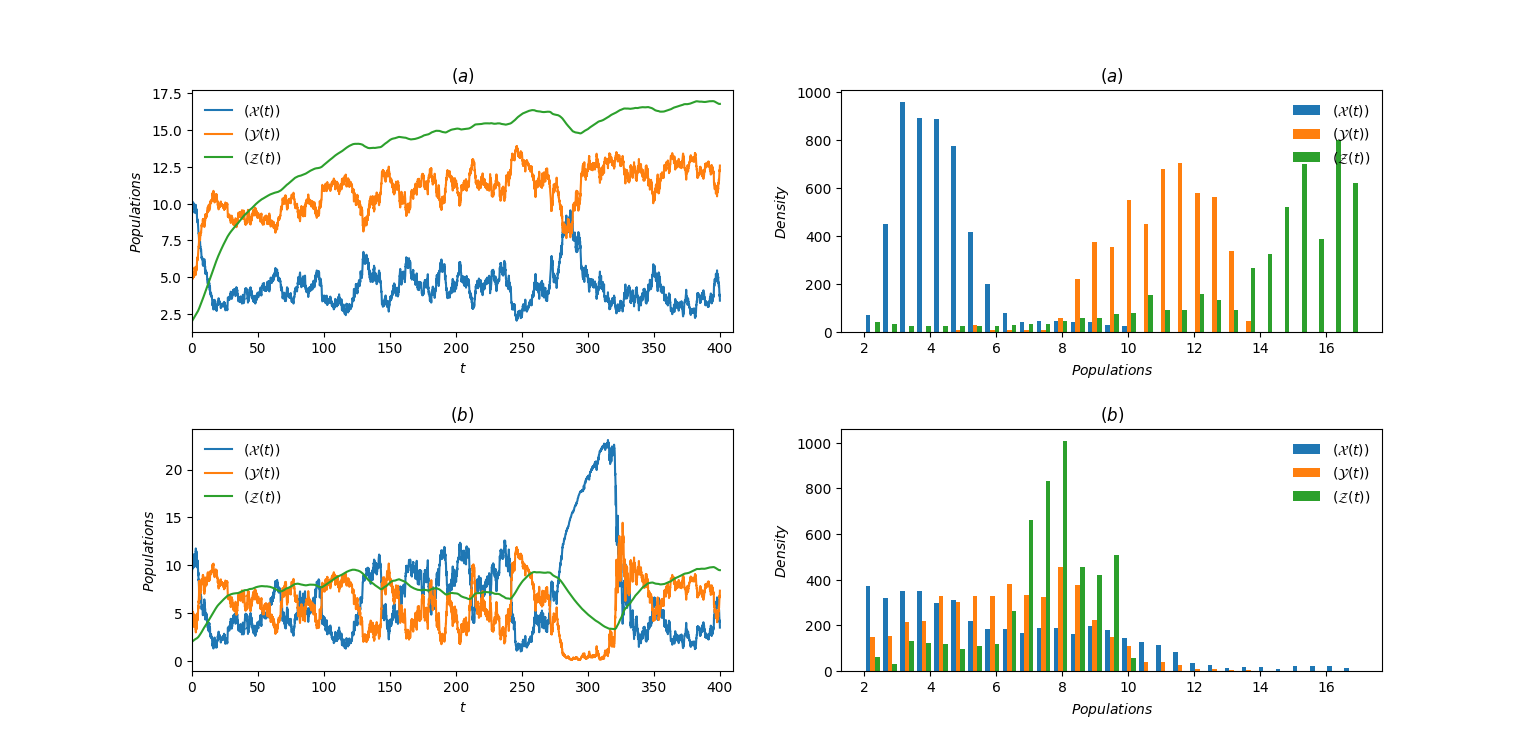}
\caption{Transmission dynamics of the disease for $\mathscr{R}_0 > 1$. Group ($a$): $\sigma_1 = \sigma_2 = \sigma_3 = 0$ and $\sigma_4 = 0.01$. Group ($b$): $\sigma_1 = \sigma_2 = \sigma_3 = 0$ and $\sigma_4 = 0.03$.}\label{F4}
\end{figure}

\begin{figure}[hbtp]
\centering
\includegraphics[height=8cm]{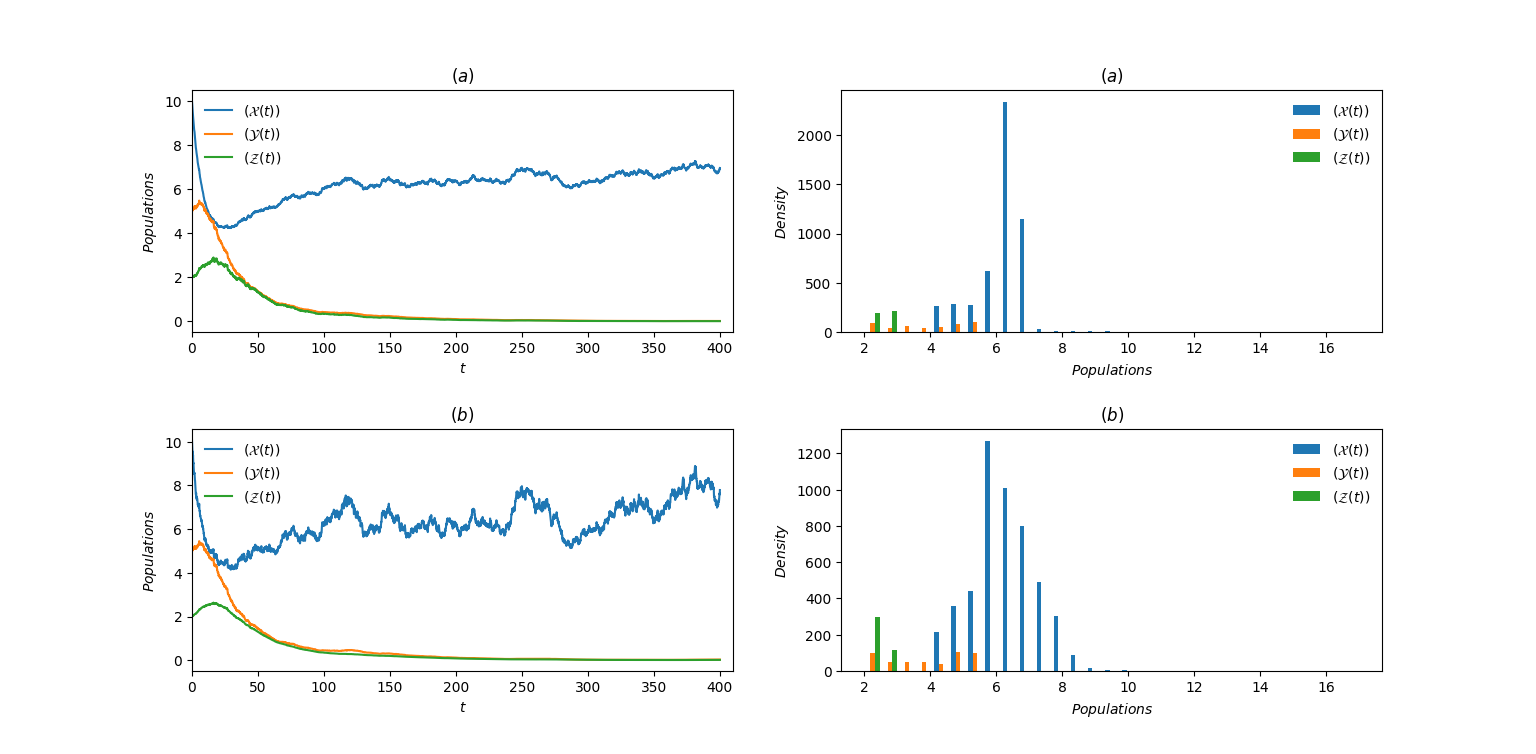}
\caption{Transmission dynamics of the disease for $\mathscr{R}_0 < 1$. Group ($a$): $\sigma_1 = 0.01$, $\sigma_2 = 0.02$, $\sigma_3 = 0.03$ and $\sigma_4 = 0$. Group ($b$): $\sigma_1 = 0.03$, $\sigma_2 = 0.02$, $\sigma_3 = 0.01$ and $\sigma_4 = 0$.}\label{F5}
\end{figure}

\begin{figure}[hbtp]
\centering
\includegraphics[height=8cm]{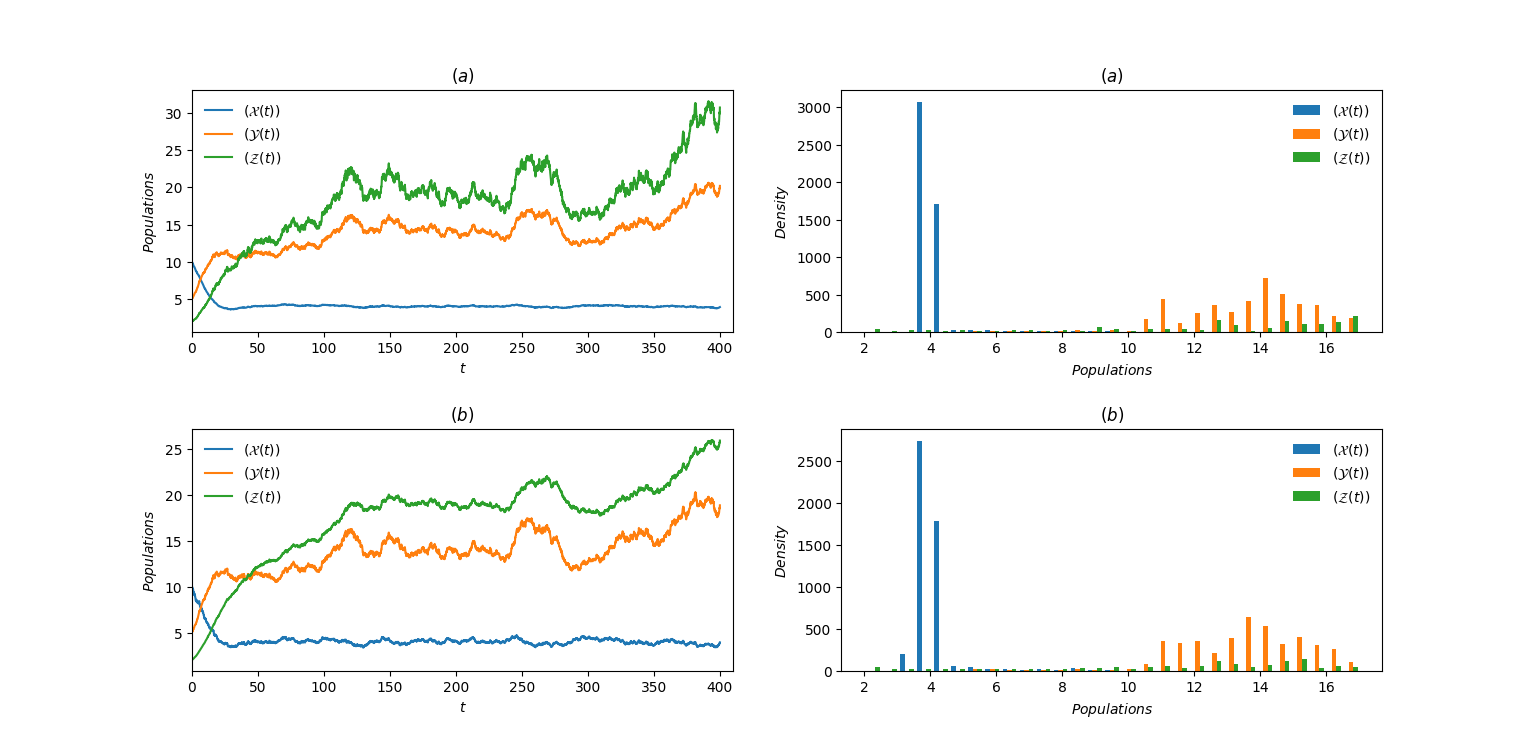}
\caption{Transmission dynamics of the disease for $\mathscr{R}_0 > 1$. Group ($a$): $\sigma_1 = 0.01$, $\sigma_2 = 0.02$, $\sigma_3 = 0.03$ and $\sigma_4 = 0$. Group ($b$): $\sigma_1 = 0.03$, $\sigma_2 = 0.02$, $\sigma_3 = 0.01$ and $\sigma_4 = 0$.}\label{F6}
\end{figure}

\begin{figure}[hbtp]
\centering
\includegraphics[height=8cm]{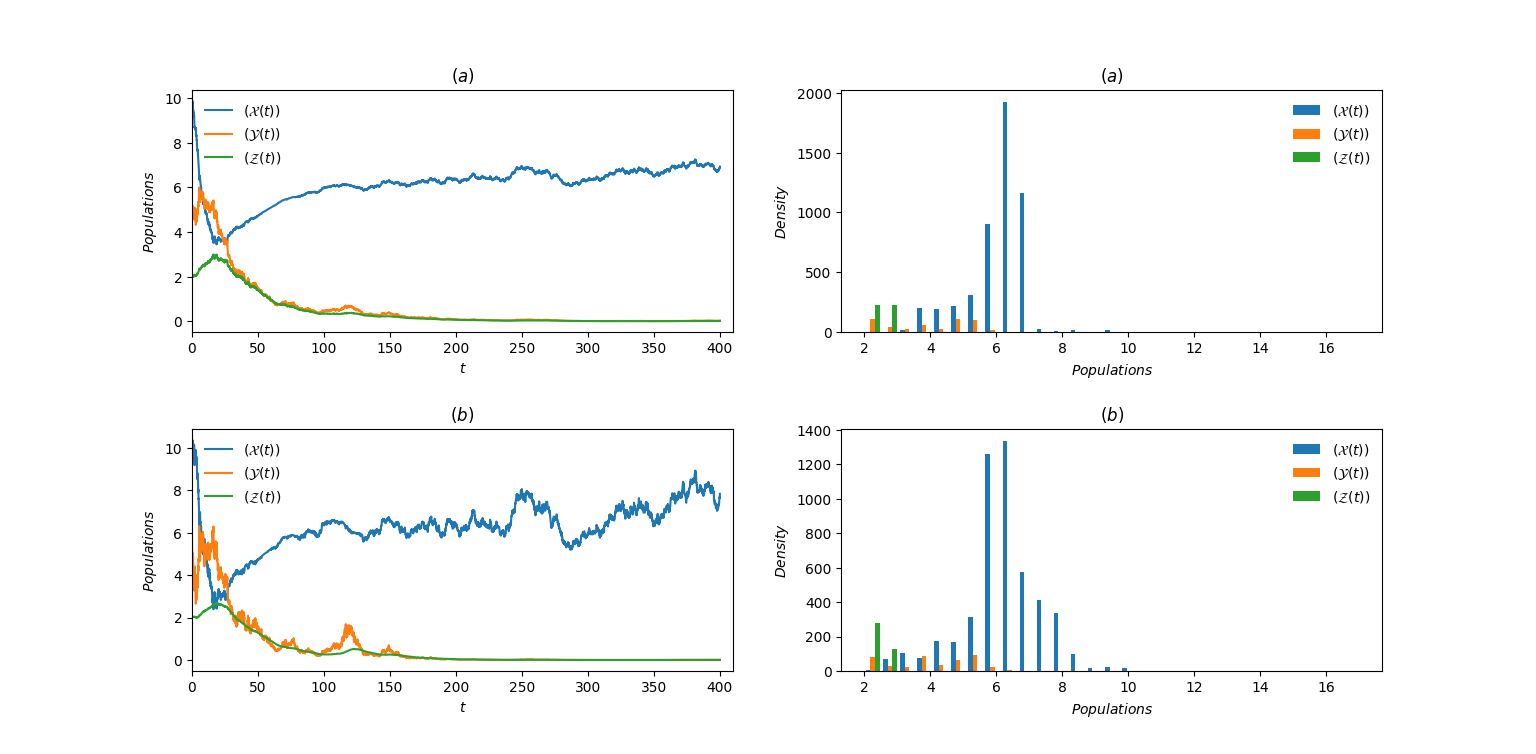}
\caption{Transmission dynamics of the disease for $\mathscr{R}_0 < 1$. Group ($a$): $\sigma_1 = 0.01$, $\sigma_2 = 0.02$, $\sigma_3 = 0.03$ and $\sigma_4 = 0.01$. Group ($b$): $\sigma_1 = 0.03$, $\sigma_2 = 0.02$, $\sigma_3 = 0.01$ and $\sigma_4 = 0.03$.}\label{F7}
\end{figure}

\begin{figure}[hbtp]
\centering
\includegraphics[height=8cm]{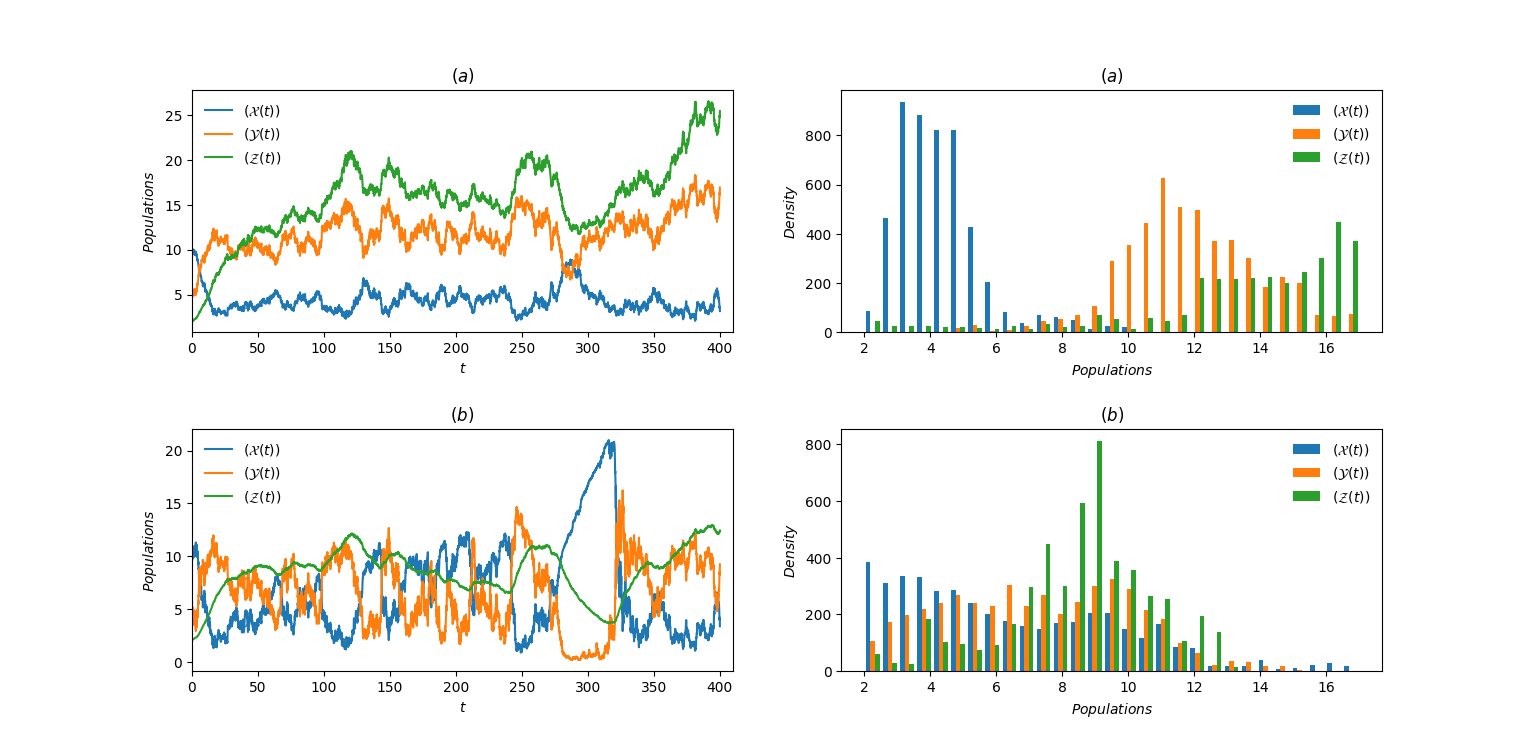}
\caption{Transmission dynamics of the disease for $\mathscr{R}_0 > 1$. Group ($a$): $\sigma_1 = 0.01$, $\sigma_2 = 0.02$, $\sigma_3 = 0.03$ and $\sigma_4 = 0.01$. Group ($b$): $\sigma_1 = 0.03$, $\sigma_2 = 0.02$, $\sigma_3 = 0.01$ and $\sigma_4 = 0.03$.}\label{F8}
\end{figure}

\section{Conclusion}\label{S8}
The SIRS model is a valuable tool for studying the long-term dynamics of infectious diseases in populations where immunity is not durable. Given the profound impact of such diseases on economies and social structures worldwide, accurate modeling is critical. Deterministic equations often fail to capture the complexity of real-world phenomena, especially when stochasticity is involved. Therefore, the use of stochastic models is a more appropriate approach.

In this study, we introduced a stochastic SIRS model tailored to capture the inherent variability in transmission dynamics with changing population environments. Using the theory of stochastic Lyapunov functions, we demonstrated the existence of a unique positive solution. In addition, we explored conditions conducive to disease extinction and analyzed the stationary distribution to identify factors influencing virus extinction. 
Our simulations shed light on the influence of noise intensity on disease transmission and provide valuable insights into the interplay between stochasticity and epidemic dynamics. Through numerical validation using the first-order stochastic Milstein scheme, we confirmed the robustness of our theoretical findings.

In conclusion, our investigation underscores the importance of using stochastic analysis to gain a comprehensive understanding of infectious disease dynamics. By incorporating stochasticity, we enhance our ability to capture the inherent variability and uncertainties of real-world epidemiological systems.


\section*{Declarations}


\subsection*{CRediT author statement} 

\textit{A. Zinihi:} Conceptualization, 
Methodology, 
Software, 
Formal analysis, 
Investigation, 
Writing -- Original Draft, 
Writing -- Review \& Editing, 
Visualization. 

\textit{M. R. Sidi Ammi:} Conceptualization, 
Methodology, 
Validation, 
Formal analysis, 
Investigation,  
Writing -- Original Draft, 
Writing -- Review \& Editing,  
Supervision.

\textit{M. Ehrhardt:} Validation, 
Formal analysis, 
Investigation, 
Writing -- Original Draft, 
Writing -- Review \& Editing, 
Project administration.




\subsection*{Data availability} 
All information analyzed or generated, which would support the results of this work are available in this article.
No data was used for the research described in the article.

\subsection*{Conflict of interest} 
The authors declare that there are no problems or conflicts 
of interest between them that may affect the study in this paper.


\bibliographystyle{acm}
\bibliography{paper}


\end{document}